\documentclass[11pt]{amsart}

\usepackage{amsmath}
\usepackage{amssymb}
\usepackage{graphicx}
\usepackage[
  citestyle=numeric,
  bibstyle=numeric
]{biblatex}

\newtheorem{thm}{Theorem}[section]
\newtheorem*{thm*}{Theorem}
\newtheorem{prop}[thm]{Proposition}

\newtheorem*{lem*}{Lemma}
\newtheorem*{cor*}{Corollary}
\theoremstyle{definition}
\newtheorem{def_n}[thm]{Definition}
\newtheorem{cor}{Corollary}[section]

\newtheorem{rem}[thm]{Remark}

\numberwithin{equation}{section}

\newcommand{\Hcal}{\mathcal{H}} % define \Hcal here

\title{Continuously Frame-Convertible Sequences}
\author{Chad Berner}

\begin{document}

\maketitle

\begin{abstract}
Frame theory provides a robust method for recovering vectors in a Hilbert space from inner product data, though the associated decomposition formula can be computationally demanding. We relax the frame condition by studying sequences that can be continuously mapped to Parseval frames, yielding a similar reconstruction formula. We characterize such sequences in terms of their analysis operators, without reference to any continuous mapping. We present examples, including sequences that are not complete and those containing no frame sequence. We also give norm-based criteria for when unconditional Schauder sequences and finite unions of bounded unconditional Schauder sequences admit this property. Finally, we classify finite Borel measures on the torus for which the standard exponential system has this property and forms a Riesz–Fischer sequence.
\end{abstract}

\section{Introduction}

Frame theory, originating from Duffin and Schaeffer \cite{Duffin1952Class}, provides useful and stable tools to recover vectors in a Hilbert space $\Hcal$ from inner product data. Specifically, the frame decomposition formula for a frame $\{f_n\}_{n=0}^{\infty}\subseteq \Hcal$ provides a representation of any vector $f\in \Hcal$ via its inner product data as follows:

$$f=\sum_{n=0}^{\infty}\langle f, f_n\rangle S^{-1}f_n=\sum_{n=0}^{\infty}\langle f, S^{-1}f_n\rangle f_n=\sum_{n=0}^{\infty}\langle S^{-1}f, f_n\rangle f_n$$ where $S\in B(\Hcal)$ denotes the positive and invertible frame operator corresponding to $\{f_n\}$. Moreover, it is known that for a frame $\{f_n\}_{n=0}^{\infty}\subseteq \Hcal$, for all $f\in \Hcal$,
$$f=\sum_{n=0}^{\infty} \left\langle f, S^{-\frac{1}{2}}f_n \right\rangle S^{-\frac{1}{2}}f_n.$$ That is, $\{S^{-\frac{1}{2}}f_n\}$ is a Parseval frame.

However, in practice, computing the inverse (or inverse square root) of the frame operator S can be difficult. Since the frame decomposition formula already can require applying a nontrivial invertible operator to the sequence, it is natural to relax the condition and ask:
which sequences $\{f_n\}\subseteq \Hcal$ have the property that there exists an invertible $B\in B(\Hcal)$ such that $\{Bf_n\}$ is a Parseval frame. Yet, it is easy to see that this is an equivalent definition of a frame. Therefore, in this paper, we instead investigate which sequences $\{f_n\}\subseteq \Hcal$ have the property that there exists a $B\in B(\Hcal)$ such that $\{Bf_n\}$ is a Parseval frame. A sequence with this property will be called a \textbf{continuously frame-convertible sequence} (CFC sequence). We provide a complete characterization of CFC sequences in terms of the analysis operator, without reference to a bounded operator. Our analysis shows that, although CFC sequences share some properties with frames, they differ from them in substantial ways. In particular, we will show that CFC sequences, although they admit frame-like reconstruction properties, need not be Bessel sequences, need not be complete, and may not even contain frame sequences.

There is substantial research on relaxing the frame condition for sequences in a Hilbert space in order to understand the consequences for stable signal recovery. For instance, Li and Ogawa studied frame series and reconstruction in the weak sense \cite{Li2001Pseudo-duals}. Additionally, Balazs, Antoine, Grybos, and others have investigated weighted frames, that is, sequences that can be converted into frames via a sequence of scalars; see \cite{Balazs2010Weighted, Dutkay2016Weighted, Balazs2023Weighted}. Furthermore, Christensen provided results on frame expansions utilizing bounded operators \cite{Christensen1995Frames}. Antoine, Corso, and Trapani also studied metric operators in relation to frames \cite{Antoine2021Lower}, considering when $\{Bf_n\}$ forms a frame for positive definite (possibly unbounded) operators $B$. Their setting is more general in the sense that it allows unbounded positive operators, whereas we restrict attention to bounded operators without a positivity assumption and obtain a more explicit characterization. In \cite{Berner2026Sequences}, sequences with a frame-like decomposition were investigated; this condition is strictly stronger than the CFC property considered here. Finally, the Kaczmarz algorithm offers an independent approach to recovering vectors in a Hilbert space from inner product data; see \cite{Kwapien2001Kaczmarz, Haller2005Kaczmarz}. In this work, we provide examples of weighted frames and sequences arising from the Kaczmarz algorithm, and identify specific cases in which they are, or fail to be, CFC sequences.

The primary aim of this paper is to establish a classification of CFC sequences in terms of the analysis operator, without reference to any bounded operator. A secondary objective is to determine when specific sequences, including finite unions of bounded Schauder sequences and exponential systems, are CFC sequences.

The outline of this paper is as follows. In Section \ref{3}, we classify CFC sequences in terms of the analysis operator and provide examples of CFC sequences that fail to exhibit properties typical of frames. In Section \ref{4}, we show that Riesz–Fischer sequences are CFC sequences and classify them in terms of the analysis operator. In Sections \ref{5}, \ref{6}, we provide a norm-based classification for when unconditional Schauder sequences and finite unions of bounded unconditional Schauder sequences are CFC sequences. Finally, in Section \ref{7}, we classify finite Borel measures on the torus for which the standard exponential system is a CFC sequence and, separately, those for which it is a Riesz–Fischer sequence.

\section{Preliminaries}

We begin by providing the definition of a frame:
\begin{def_n}
Let $\{f_{n}\}_{n}\subseteq \Hcal$ where $\Hcal$ is a Hilbert space.
\begin{enumerate}
\item If there exists $A>0$ such that
$$A\|f\|^{2}\leq \sum_{n}|\langle f, f_{n}\rangle|^{2}$$
for all $f\in \Hcal$, then $\{f_{n}\}_{n}$ is called a \textbf{lower semi-frame}.
\item If there exists $B>0$ such that
$$\sum_{n}|\langle f, f_{n}\rangle|^{2}\leq B\|f\|^{2}$$
for all $f\in \Hcal$, then $\{f_{n}\}_{n}$ is called a \textbf{Bessel sequence}.
\item If there exists $A,B>0$ such that
    $$A\|f\|^{2}\leq \sum_{n}|\langle f, f_{n}\rangle|^{2}\leq B\|f\|^{2}$$
    for all $f\in \Hcal$, then $\{f_{n}\}_{n}$ is called a \textbf{frame}. Furthermore, if $\Hcal=\overline{\operatorname{span}\{f_n\}}$, then $\{f_{n}\}_{n}$ is called a \textbf{frame sequence}.
\item If
$$\|f\|^{2}=\sum_{n}|\langle f, f_{n}\rangle|^{2}$$
for all $f\in \Hcal$, then $\{f_{n}\}_{n}$ is called a \textbf{Parseval frame}.
\item If $\{f_{n}\}$ is a frame, and there is a $\{g_{n}\}_{n}\subseteq \Hcal$ such that
$$\langle f_n,g_k\rangle=\delta_{nk},$$then $\{f_{n}\}$ is called a \textbf{Riesz basis}. Furthermore, if $\Hcal=\overline{\operatorname{span}\{f_n\}}$, then $\{f_{n}\}_{n}$ is called a \textbf{Riesz sequence}.
\end{enumerate}
Moreover, if $\{f_{n}\}_{n}$ is a Bessel sequence in Hilbert space $\Hcal$,
the map on $\Hcal$: $f\to \{\langle f, f_{n}\rangle \}_{n}$ is called the \textbf{analysis operator}, its adjoint is called the \textbf{synthesis operator}, and 
we denote $S: \Hcal\to \Hcal$ as the positive operator such that
$$S(f)=\sum_{n}\langle f, f_{n}\rangle f_{n},$$ which is called the \textbf{frame operator}.
Additionally, if $\{f_{n}\}_{n}$ is a frame, then its associated frame operator is invertible, and $\{S^{-1}f_{n}\}_{n}$ is a frame such that
$$f=\sum_{n}\langle f, S^{-1}f_{n}\rangle f_{n}=\sum_{n}\langle f, f_{n}\rangle S^{-1}f_{n}$$ with convergence in norm for all $f\in \Hcal$.

Additionally, if $\{f_{n}\}$ and $\{g_{n}\}$ are frames (Bessel sequences) such that for all $f\in \Hcal$,
$$f=\sum_{n}\langle f, f_{n}\rangle g_{n},$$ then $\{f_{n}\}$ and $\{g_{n}\}$ are called \textbf{dual frames}. In particular, many frames have multiple dual frames. Additionally, because the synthesis operator and analysis operator associated with both sequences are bounded, by an adjoint calculation, 
$$f=\sum_{n}\langle f,g_{n}\rangle f_{n}$$ also holds for all $f\in \Hcal$. Finally, the norm convergence of frame series is also unconditional.
\end{def_n}

Furthermore, we define Schauder bases:
\begin{def_n}
Let $\{f_{n}\}_{n}\subseteq \Hcal$ where $\Hcal$ is a Hilbert space. $\{f_n\}_{n}$ is called a \textbf{Schauder basis} if there is a $\{g_{n}\}_{n}\subseteq \Hcal$ such that
$$\langle f_n,g_k\rangle=\delta_{nk}$$ and
\begin{equation}\label{schauderdef}
f=\sum_{n}\langle f,g_n\rangle f_n
\end{equation}
with convergence in norm for all $f\in \Hcal$.
If the convergence in equation \eqref{schauderdef} is unconditional for all $f\in \Hcal$, then $\{f_n\}_n$ is called an \textbf{unconditional Schauder basis}. Again, if $\Hcal=\overline{\operatorname{span}\{f_n\}}$, then $\{f_n\}_n$ is called an \textbf{(unconditional) Schauder sequence}.
\end{def_n}
\begin{rem}
A Schauder basis may not be a frame, and a frame may not be a Schauder basis. However, Riesz bases are exactly the sequences that are both frames and Schauder bases. Moreover, Riesz bases are exactly sequences that are similar to an orthonormal basis. Therefore, it follows that any subsequence of a Riesz sequence is a Riesz sequence.
\end{rem}

Additionally, we will make good use of the following results about Riesz bases \cite{Gohberg1969Theory} and the recently confirmed Feichtinger conjecture \cite{Marcus2015Interlacing}:
\begin{thm*}[Gohberg]
Let $\Hcal$ be a Hilbert space. $\{f_n\}\subseteq \Hcal$
is a Riesz basis if and only if it
is an unconditional Schauder basis and
$$0 < \inf_{n}
\|f_n\| \leq \sup_{n}
\|f_n\| < \infty.$$
\end{thm*}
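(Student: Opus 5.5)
The plan is to prove both directions of Gohberg's theorem directly from the definitions in the preliminaries. Throughout I would use the fact recorded in the Remark: a Riesz basis is the image $\{Te_n\}$ of an orthonormal basis $\{e_n\}$ under some invertible $T\in B(\Hcal)$. That characterization is the object I aim to produce in the converse direction.

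The forward direction is routine. If $f_n=Te_n$ with $T$ bounded and invertible, then $\|f_n\|\in[\|T^{-1}\|^{-1},\|T\|]$, which gives the norm bounds. For the basis property, put $g_n=(T^{-1})^*e_n$. Then $\langle f_n,g_k\rangle=\delta_{nk}$, and
$$f=T\Big(\sum_n\langle T^{-1}f,e_n\rangle e_n\Big)=\sum_n\langle f,g_n\rangle f_n .$$
The inner series converges unconditionally because it is an orthonormal expansion, and applying the bounded operator $T$ preserves unconditional convergence.

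For the converse, let $\{f_n\}$ be an unconditional Schauder basis with biorthogonal sequence $\{g_n\}$ and $0<a\le\|f_n\|\le b$. I would set up the following steps in order.

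\begin{enumerate}
\item Show that the unconditional basis constant $K$ is finite. By the Banach--Steinhaus theorem, the operators $P_\varepsilon f=\sum_n\varepsilon_n\langle f,g_n\rangle f_n$, taken over sign sequences $\varepsilon\in\{\pm1\}^{\mathbb N}$, are uniformly bounded. The standard route goes through finite subsets and the partial-sum projections, which are bounded by uniform boundedness together with unconditional convergence.

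\item Average over signs. For finitely supported scalars $c_n$, take the expectation over independent random signs and use the parallelogram law:
$$\mathbb E_\varepsilon\Big\|\sum_n\varepsilon_n c_n f_n\Big\|^2=\sum_n|c_n|^2\|f_n\|^2 .$$

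\item Combine the two. By step 1, every signed sum satisfies
$$K^{-2}\Big\|\sum_n c_nf_n\Big\|^2\le\Big\|\sum_n\varepsilon_nc_nf_n\Big\|^2\le K^2\Big\|\sum_n c_nf_n\Big\|^2 .$$
Together with step 2 and the norm bounds, this yields
$$\frac{a^2}{K^2}\sum_n|c_n|^2\le\Big\|\sum_n c_nf_n\Big\|^2\le K^2b^2\sum_n|c_n|^2 .$$
\end{enumerate}

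Consequently the map $Te_n=f_n$ extends to a bounded operator that is bounded below. Its range contains all finite combinations of the $f_n$, which are dense because the $f_n$ form a basis, and the range is closed because $T$ is bounded below. So $T$ is invertible and $\{f_n\}$ is a Riesz basis. Equivalently, in terms of the paper's definition, the frame bounds follow by dualizing these estimates, and the biorthogonal sequence $\{g_n\}$ is already available.

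The main obstacle is step 1, getting a uniform constant $K$ from pointwise unconditional convergence. I would handle it as follows. For each fixed $f$, unconditional convergence makes the set $\{P_\varepsilon f\}$ bounded. Each $P_\varepsilon$ is a well-defined linear map, and it is bounded because it is a strong limit of finite-rank operators that are bounded, since the coefficient functionals $\langle\cdot,g_n\rangle$ are continuous. The uniform boundedness principle then gives $\sup_\varepsilon\|P_\varepsilon\|=K<\infty$. Finally, since $P_\varepsilon^2=I$, applying the same bound to $P_\varepsilon$ applied to the signed sum gives the lower inequality in step 3.
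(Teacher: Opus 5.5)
The paper gives no proof of this statement. It quotes the result from Gohberg--Krein and uses it as a black box (for instance in Section~5, to see that $\{f_n/\|f_n\|\}$ is Bessel), so there is no in-paper argument to compare with, and what matters is whether your proof stands on its own. It does. The forward direction via the Remark's characterization $f_n=Te_n$ is fine. In the converse, your three ingredients combine to give the Riesz inequalities with explicit constants $a^2/K^2$ and $K^2b^2$:
\begin{itemize}
\item a uniform sign constant $K$;
\item the identity $\mathbb{E}_\varepsilon\|\sum_n\varepsilon_nc_nf_n\|^2=\sum_n|c_n|^2\|f_n\|^2$;
\item the two-sided comparison coming from $P_\varepsilon^2=I$.
\end{itemize}
After that, ``bounded below with dense range'' makes $T$ invertible.

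You assert, rather than prove, that unconditional convergence of $\sum_n x_n$, with $x_n=\langle f,g_n\rangle f_n$, makes every signed series converge with uniformly bounded sums. You should state the Cauchy criterion for unconditional convergence: for each $\eta>0$ there is $N$ with $\|\sum_{n\in F}x_n\|<\eta$ for all finite $F\subseteq\{N,N+1,\dots\}$. Hence $\sup_F\|\sum_{n\in F}x_n\|<\infty$, and splitting a signed sum into its $+$ and $-$ parts at most doubles this bound.

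You can also drop the infinite operators $P_\varepsilon$ and the first Banach--Steinhaus step entirely. Apply the uniform boundedness principle directly to the finite-rank maps $f\mapsto\sum_{n\in F}\varepsilon_n\langle f,g_n\rangle f_n$ ($F$ finite, signs arbitrary), which are pointwise bounded by the estimate just given. Step~3 only involves finitely supported $c$, so this suffices, and the lower bound follows by applying the same estimate to $\sum_n\varepsilon_nc_nf_n$.

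Your averaging identity is the same computation that proves Orlicz's theorem: an unconditionally convergent series in a Hilbert space has $\sum_n\|x_n\|^2<\infty$. A common alternative proof applies that theorem separately to $\{f_n\}$ and to the dual basis $\{g_n\}$ to show that both are Bessel. Your version gets both Riesz bounds at once, with explicit constants.
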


\begin{thm*}[Marcus, Spielman, Srivastava]
Every norm bounded below frame in a Hilbert space can be partitioned into
finitely many Riesz sequences.
\end{thm*}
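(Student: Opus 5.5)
This statement is the Marcus--Spielman--Srivastava theorem, which the paper quotes from \cite{Marcus2015Interlacing}; I would not reprove it here. If a proof had to be sketched, I would route it through Weaver's discrepancy conjecture $\mathrm{KS}_r$. The plan has three steps: reduce to a finite-dimensional statement about rank-one sums, prove that statement, and pass back to infinite dimensions.

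\emph{Reduction.} Let $\{f_n\}$ be a frame with $\inf_n\|f_n\|>0$. Replacing $f_n$ by $S^{-1/2}f_n$ is an invertible change of variables that preserves the Riesz-sequence property of every subfamily. The new family is still bounded below in norm, since $\|S^{-1/2}f_n\|\ge \|S\|^{-1/2}\|f_n\|$. So I would assume $\{u_n\}$ is a Parseval frame with $\|u_n\|^2\ge c>0$.

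By Naimark dilation, $u_n=Pe_n$ for an orthonormal basis $\{e_n\}$ of a larger space and an orthogonal projection $P$. Set $v_n=(I-P)e_n$, so $\|v_n\|^2\le 1-c$. For finitely supported coefficients $a$, orthogonality gives
\begin{equation*}
\Bigl\|\sum_{n\in J} a_n u_n\Bigr\|^2=\|a\|^2-\Bigl\|\sum_{n\in J}a_nv_n\Bigr\|^2 .
\end{equation*}
Hence if $\bigl\|\sum_{n\in J}v_nv_n^*\bigr\|\le 1-\varepsilon$, then $\{u_n\}_{n\in J}$ satisfies a lower Riesz bound $\varepsilon$. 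Its upper Riesz bound is automatic because $\{u_n\}$ is Bessel. The task is therefore a partition lemma: a Parseval system $\{v_n\}$ with $\sup_n\|v_n\|^2\le\delta<1$ splits into finitely many blocks, each with frame-operator norm at most $1-\varepsilon$.

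\emph{The main obstacle: the finite-dimensional $\mathrm{KS}_r$ bound.} The target is the following claim. If $w_1,\dots,w_m\in\mathbb{C}^d$ satisfy $\sum_i w_iw_i^*=I$ and $\|w_i\|^2\le\delta$, then there is a partition into $r$ sets $S_1,\dots,S_r$ with
\begin{equation*}
\Bigl\|\sum_{i\in S_j}w_iw_i^*\Bigr\|\le\bigl(r^{-1/2}+\sqrt{\delta}\bigr)^2\quad\text{for every } j .
\end{equation*}
I would proceed in four steps.
\begin{enumerate}
\item Assign each $w_i$ independently and uniformly to one of the $r$ blocks. Encode this as the random vector $\sqrt{r}\,w_i\otimes e_{k}\in\mathbb{C}^d\otimes\mathbb{C}^r$, where $k$ is the block chosen for $w_i$. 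The $j$-th block sum is then (up to the factor $r$) the $j$-th diagonal compression of one $dr\times dr$ sum of rank-one matrices.
\item Show that the characteristic polynomials of the realizations form an \emph{interlacing family}. This comes from real-rootedness of mixed characteristic polynomials, proved via real stability of $\det\bigl(\sum_i z_iA_i\bigr)$ and the fact that the operators $1-\partial_{z_i}$ preserve real stability.
\item Conclude that some realization has largest eigenvalue at most the largest root of the expected characteristic polynomial.
\item Bound that largest root by the multivariate barrier-function argument, which yields the bound $(1+\sqrt{\epsilon})^2$ for expectation-isotropic random rank-one matrices with $\mathbb{E}\|\cdot\|^2\le\epsilon$. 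Taking $\epsilon=r\delta$ here and dividing by $r$ gives the displayed bound.
\end{enumerate}
I expect step 4, the barrier estimate, to be the hardest part.

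\emph{Closing the gap between $\delta$ and $1$, and passing to infinite dimensions.} The bound above is below $1$ only when $\delta$ is small, whereas here $\delta=1-c$ may be close to $1$. I would close this gap with the Casazza--Tremain equivalences: $\mathrm{KS}_r$ for a single $r$ implies paving and the Feichtinger conjecture for all $\delta<1$. Concretely, this means iterating the partition so that each step lowers the norm of every block. This bookkeeping is where I would be most careful, because naive splitting of each $v_n$ into several small copies destroys the partition of the index set.

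Finally, the infinite case follows by a compactness argument over finite truncations. Choose partitions of $\{1,\dots,N\}$ into a fixed number $R$ of blocks with uniform bounds. Extract by diagonalization a partition of $\mathbb{N}$ that agrees with the $N$-th partition on initial segments. The uniform frame-operator bounds then pass to the limit, since norms of finite sums are checked on finite sets.
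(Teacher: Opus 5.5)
The paper does not prove this statement. It is quoted from \cite{Marcus2015Interlacing} and used only as a black box in Section~\ref{6}. Your sketch reconstructs the standard argument behind that citation, and in outline it is correct. You pass to a Parseval frame via $S^{-1/2}$, use the Naimark complement to turn the Riesz property into a Weaver-type partition statement for $\{v_n\}$ with $\|v_n\|^2\le\delta<1$, and then invoke the interlacing-family and barrier bound $\bigl(r^{-1/2}+\sqrt{\delta}\bigr)^2$.

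One step is misjudged. The bound is not ``below $1$ only when $\delta$ is small''; that is true only if $r$ is held fixed. Your finite-dimensional bound holds for every $r$, so take $\delta=1-c$ and $r>(1-\sqrt{\delta})^{-2}$. Then every block satisfies $\bigl\|\sum_{n\in J}v_nv_n^*\bigr\|\le 1-\varepsilon$ with $\varepsilon=1-\bigl(r^{-1/2}+\sqrt{\delta}\bigr)^2>0$. Your Naimark identity then gives the lower Riesz bound $\varepsilon$ for each $\{u_n\}_{n\in J}$ at once. The Casazza--Tremain iteration is needed only when $r$ and the constants are frozen, as in Weaver's $\mathrm{KS}_2$. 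Here the step you flagged as the delicate bookkeeping can simply be dropped.

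You do need one detail you left out in the passage to infinite dimensions. A truncation $\{v_n\}_{n\le N}$ satisfies only $\sum_{n\le N}v_nv_n^*\preceq I$ on its finite-dimensional span $V$, while your finite statement assumes equality. Pad the family as follows.
\begin{enumerate}
\item Write the spectral decomposition $I_V-\sum_{n\le N}v_nv_n^*=\sum_k\lambda_k x_kx_k^*$ with $0\le\lambda_k\le1$.
\item Split each term $\lambda_kx_kx_k^*$ into $L\ge1/\delta$ equal rank-one pieces, each of squared norm $\lambda_k/L\le\delta$.
\item Apply the partition result to the enlarged family, then discard the auxiliary vectors. Deleting vectors only decreases block norms.
\end{enumerate}
The block bound is inherited under restriction to initial segments. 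So the good partitions of $\{1,\dots,N\}$ form an infinite, finitely branching tree, and König's lemma (your diagonal extraction) yields a partition of $\mathbb{N}$ into $r$ blocks. Each block has Bessel bound at most $1-\varepsilon$, which is exactly what the Naimark step requires.
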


Throughout this paper, $\Hcal$ denotes a separable infinite dimensional Hilbert space, $P_{D}$ denotes the orthogonal projection onto subspace $D\subseteq \Hcal$, and $A_{f_n}$ denotes the possibly unbounded analysis operator corresponding to $\{f_n\}\subseteq \Hcal$. In particular,
$A_{f_n}: \operatorname{dom}(A_{f_n})\subseteq \Hcal\to \ell^{2}(\mathbb{N})$ where 
$$A_{f_n}(f)=\{\langle f, f_n\rangle\}_{n=0}^{\infty}$$ and
    $$\operatorname{dom}(A_{f_n})=\left\{f\in \Hcal:\{\langle f, f_n\rangle\}_{n=0}^{\infty}\in \ell^{2}(\mathbb{N}) \right\} .$$

\section{Continuously frame convertible sequences}\label{3}

We begin by defining CFC sequences and provide a classification of CFC sequences in terms of the analysis operator, without reference to any bounded operator. We also present two examples of CFC sequences, one of which is not complete and one of which does not contain a frame sequence. This illustrates that the notion of a CFC sequence is significantly weaker than that of a frame. Intuitively, it requires only that ${f_n}$ can be continuously squeezed or stretched into a sequence that admits perfect frame reconstruction.

\begin{def_n}
A sequence $\{f_n\}_{n=0}^{\infty}\subseteq \Hcal$ is a \textbf{continuously frame-convertible sequence} or \textbf{CFC sequence} if there is a $B\in B(\Hcal)$ such that $\{Bf_n\}$ is a Parseval frame.

Furthermore, if $B$ can be chosen to be injective or surjective, we will say $\{f_n\}$ is an \textbf{ICFC sequence} or $\textbf{SCFC sequence}$ respectively.
\end{def_n}

\begin{thm}\label{mainclassthm}
The following are equivalent:
\begin{enumerate}
    \item $\{f_n\}_{n=0}^{\infty}\subseteq \Hcal$ is a CFC sequence
    \item $\operatorname{Im}(A_{f_n})$ contains an infinite dimensional closed subspace.
    \item There is an infinite-dimensional subspace $D\subseteq \Hcal$ and $A>0$ such that
$$A\|f\|^2 \leq \sum_{n=0}^{\infty}|\langle f, f_n\rangle|^{2}<\infty $$
for all $f\in D$.
    
\end{enumerate}

Furthermore, assuming $(3)$,
\begin{enumerate}
    \item If $D$ is closed, $\{f_n\}$ is a SCFC sequence. Conversely, if $\{f_n\}$ is a SCFC sequence, $D$ may be chosen to be closed.
    \item If $\overline{D}=\Hcal$, $\{f_n\}$ is an ICFC sequence. Conversely, if $\{f_n\}$ is an ICFC sequence, $D$ may be chosen so that $\overline{D}=\mathcal{H}.$
    \item 
   $\left\{(TT^{*})^{\frac{1}{2}}f_n  \right\}\subseteq \overline{D}$ is a Parseval frame in $\overline{D}$ where $T\in B(C, \Hcal)$ and $C$ is a closed subspace of $\ell^{2}(\mathbb{N})$.
\end{enumerate}

\end{thm}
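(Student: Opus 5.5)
The plan is to prove the cycle $(1)\Rightarrow(3)\Rightarrow(2)\Rightarrow(1)$, keeping track of the operators built along the way, since the addenda (a)–(c) will be read off from them.

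\textbf{$(1)\Rightarrow(3)$.} Suppose $\{Bf_n\}$ is a Parseval frame. For every $g\in\Hcal$,
$$\sum_n|\langle B^*g,f_n\rangle|^2=\sum_n|\langle g,Bf_n\rangle|^2=\|g\|^2 .$$
Set $D=\operatorname{Im}(B^*)$. The identity above shows $B^*$ is injective, so $D$ is infinite dimensional. For $f=B^*g\in D$ we get $\sum_n|\langle f,f_n\rangle|^2=\|g\|^2\ge \|B\|^{-2}\|f\|^2$, which is $(3)$ with $A=\|B\|^{-2}$. This also gives the converse halves of (a) and (b):
\begin{itemize}
\item if $B$ is surjective, then $B^*$ is bounded below, so $D$ is closed;
\item if $B$ is injective, then $B^*$ has dense range, so $\overline D=\Hcal$.
\end{itemize}

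\textbf{$(3)\Rightarrow(2)$.} On $D$ the operator $A_{f_n}$ is bounded below. Hence its inverse $A_{f_n}(D)\to D$ is bounded with norm at most $A^{-1/2}$. Let $C=\overline{A_{f_n}(D)}\subseteq\ell^2(\mathbb N)$ and extend the inverse to a bounded $T:C\to\Hcal$.

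For $c\in C$, pick $d_k\in D$ with $A_{f_n}d_k\to c$; then $d_k\to Tc$. Coordinatewise, $\langle d_k,f_n\rangle\to\langle Tc,f_n\rangle$ and also $\langle d_k,f_n\rangle\to c_n$. Thus $Tc\in\operatorname{dom}(A_{f_n})$ and $A_{f_n}Tc=c$. So $C\subseteq\operatorname{Im}(A_{f_n})$ is a closed infinite-dimensional subspace.

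Two further facts about $T$ are needed later:
\begin{itemize}
\item $T$ is injective, and $\operatorname{Im}T\supseteq D$.
\item If $D$ is closed, then $A_{f_n}|_D$ is a closed operator on a Hilbert space, hence bounded by the closed graph theorem. Then $T=(A_{f_n}|_D)^{-1}$ is bounded below with closed range $D$.
\end{itemize}

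\textbf{$(2)\Rightarrow(1)$.} This is the step I expect to be the main obstacle: producing a bounded right inverse of $A_{f_n}$ on $C$ from a purely set-theoretic inclusion. Observe that $A_{f_n}$ is a closed operator, since each coordinate functional is continuous. Its kernel is $K=\{f_n\}^\perp$, which is closed. For $c\in C$, define $Tc$ as the unique preimage of $c$ lying in $\operatorname{dom}(A_{f_n})\cap K^\perp$. Then $T$ is linear and injective, and its graph is closed because $A_{f_n}$ is closed and $K^\perp$ is closed. By the closed graph theorem $T\in B(C,\Hcal)$, with $A_{f_n}T=\mathrm{id}_C$.

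Now, for $c\in C$,
$$\langle c,T^*f_n\rangle=\langle Tc,f_n\rangle=c_n=\langle c,P_Ce_n\rangle .$$
Hence $T^*f_n=P_Ce_n$, which is a Parseval frame for $C$. Let $U:C\to\Hcal$ be unitary (both spaces are separable and infinite dimensional) and put $B=UT^*$; then $\{Bf_n\}$ is a Parseval frame.

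In the $(3)\Rightarrow(2)\Rightarrow(1)$ route one may use the $T$ from $(3)\Rightarrow(2)$ directly. This gives the forward halves of (a) and (b):
\begin{itemize}
\item if $D$ is closed, $T$ is bounded below with closed range, so $T^*$, and hence $B$, is surjective;
\item if $\overline D=\Hcal$, then $T$ has dense range, so $T^*$, and hence $B$, is injective.
\end{itemize}

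\textbf{Item (c).} Take the polar decomposition $T^*=V(TT^*)^{1/2}$. Here $V$ is a partial isometry with initial space $\overline{\operatorname{Im}(TT^*)^{1/2}}=(\ker T^*)^\perp=\overline{\operatorname{Im}T}=\overline D$, and final space $\overline{\operatorname{Im}T^*}=(\ker T)^\perp=C$, using that $T$ is injective. So $V^*:C\to\overline D$ is unitary. Since every $f_n$ lies in $K^\perp=\overline D$ and $V^*V$ is the identity on $\overline D$, we get
$$(TT^*)^{1/2}f_n=V^*T^*f_n=V^*P_Ce_n .$$
This is a unitary image of a Parseval frame for $C$, hence a Parseval frame in $\overline D$. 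The step to check carefully here is the identification of the initial and final spaces of $V$.
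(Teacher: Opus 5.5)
Your proof is correct in substance and uses the same ingredients as the paper:
\begin{itemize}
\item $D=\operatorname{Im}B^*$ with lower bound $\|B\|^{-2}$;
\item a closed-graph argument giving $T\in B(C,\Hcal)$ with $A_{f_n}T=\mathrm{id}_C$;
\item the adjoint $T^*$ followed by a unitary;
\item the polar decomposition for (c).
\end{itemize}
Your $T$ is the paper's $A_{f_n}^{-1}$. The range of your $T$ in $(2)\Rightarrow(1)$ is exactly the paper's $D=\{f\in\overline{\operatorname{span}\{f_n\}}:A_{f_n}f\in C\}$. You only reorder the cycle as $(1)\Rightarrow(3)\Rightarrow(2)\Rightarrow(1)$. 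Your coordinatewise check that $A_{f_n}T=\mathrm{id}_C$ holds on all of $C$ gives the exact identity $T^*f_n=P_Ce_n$. That is a cleaner route to the Parseval property than the paper's computation on the dense set $A_{f_n}(D)$.

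One justification in item (c) is false, although the conclusion survives. You assert that every $f_n$ lies in $K^\perp=\overline D$. In fact $K^\perp=\overline{\operatorname{span}\{f_n\}}$, and for a $D$ as in (3) this need not equal $\overline D$. For example, take $f_n=e_{2n}$ in $\ell^2(\mathbb N)$ and $D=\operatorname{span}\{e_{2n}+e_{2n+1}\}$. Then (3) holds with $A=\tfrac12$, yet $f_0\notin\overline D$.

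The identity you actually need does not depend on this. The product $V^*V$ is the projection onto $\overline{\operatorname{Im}(TT^*)^{1/2}}$, which contains the range of $(TT^*)^{1/2}$. Hence $V^*T^*=V^*V(TT^*)^{1/2}=(TT^*)^{1/2}$ as operators on all of $\Hcal$. So $(TT^*)^{1/2}f_n=V^*P_Ce_n$ for every $n$, regardless of where $f_n$ lies, and the rest of your argument for (c) goes through unchanged. This is also the content of the paper's unitary identity $U(A_{f_n}^{-1})^*=[A_{f_n}^{-1}(A_{f_n}^{-1})^*]^{1/2}$.
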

\begin{proof}
$(1)\implies (2)$

It follows that for all $f\in \Hcal$,
\begin{equation}\label{analysisopeq}
\frac{1}{\|B\|^{2}} \|B^{*}f\|^{2}\leq \|f \|^{2}=\sum_{n=0}^{\infty}|\langle B^{*}f, f_n\rangle|^{2}<\infty.  
\end{equation}

Therefore if $\{A_{f_n}(B^{*}g_k)\}$ is a Cauchy sequence, then $\{g_k\}$ is a Cauchy sequence. It follows by continuity of $B^{*}$ that $B^{*}g_k\to B^{*}g$ for some $g\in \Hcal$. As a result, it must be that $\{\langle B^{*}g_k, f_n\rangle\}_{n=0}^{\infty}\to \{\langle B^{*}g, f_n\rangle\}_{n=0}^{\infty}$ in $\ell^2(\mathbb{N})$ by uniqueness of limits. Now we have shown that $A_{f_n}|_{\operatorname{Im}(B^*)}$ is defined, injective, and has closed range. Furthermore, $\operatorname{Im}(A_{f_n}|_{\operatorname{Im}(B^*)})$ is infinite dimensional since it is clear that $\operatorname{Im}(B^{*})$ is infinite dimensional.

$(2)\implies (3)$

Let $C$ denote the infinite dimensional closed subspace contained in \newline $\operatorname{Im}(A_{f_n})$. Also define 
$$D:=\left\{f\in \overline{\operatorname{span}\{f_n\}}: A_{f_n}\in C \right\}.$$
Now note $A_{f_n}|_{D}$ is invertible onto $C$. To show that this operator is also closed, suppose $\{g_k\}\subseteq D$ and $\{A_{f_n}|_{D}(g_k)\}$ are Cauchy sequences. It follows since $A_{f_n}|_{D}$ is invertible and $C$ is closed that $A_{f_n}|_{D}(g_k)\to A_{f_n}|_{D}(f)$ for some $f\in D$. Now since $A_{f_n}|_{D}(g_k)\to A_{f_n}|_{D}(g)$ pointwise as well where $g$ is such that
$g_k\to g$, it follows that 
$f=g$ since $f,g\in \overline{\operatorname{span}\{f_n\}}$.
Therefore, since we have shown that $A_{f_n}|_{D}$ is a closed invertible operator with closed range, its inverse is continuous by the closed graph theorem and $(3)$ follows.

$(3)\implies (1)$

It follows that $(A_{f_n}|_{D})^{-1}$ whose codomain is extended to $\Hcal$, is bounded. We will denote its bounded extension by $A_{f_n}^{-1}$. Therefore, for all $f\in D,$
$$\|A_{f_n}A^{-1}_{f_n}\{\langle f, f_n\rangle\}\|^{2}=\|\{\langle f, f_n\rangle\}\|^{2}=\sum_{n=0}^{\infty}|\left\langle \{\langle f, f_n \right\rangle\},(A^{-1}_{f_n})^{*}f_n\rangle|^{2},$$ showing $\left\{(A^{-1}_{f_n})^{*}f_n \right\}\subseteq \overline{\operatorname{Im}(A_{f_n}|_{D})}$ is a Parseval frame in $\overline{\operatorname{Im}(A_{f_n}|_{D})}$. Therefore, $\left\{U(A^{-1}_{f_n})^{*}f_n \right\}\subseteq \Hcal$ is a Parseval frame in $\Hcal$ where $U: \overline{\operatorname{Im}(A_{f_n}|_{D})}\to \Hcal$ is any unitary.

Now assuming $(3)$, by the proof of $(3)\implies (1)$, if $D$ is closed, $(A^{-1}_{f_n})^{*}$ has closed range and therefore, $U(A^{-1}_{f_n})^{*}$ is surjective. Conversely if $\{Bf_n\}$ is a Parseval frame for some bounded and surjective $B$, then $\operatorname{Im}B^{*}$ is closed, which we can choose to be $D$ as in the proof of $(1)\implies (2)$.

Furthermore, if $\overline{D}=\Hcal$, 
by the proof of $(3)\implies (1)$, $(A_{f_n}^{-1})^{*}$ would be injective and hence $U(A^{-1}_{f_n})^{*}$ is injective. Conversely if $\{Bf_n\}$ is a Parseval frame for some bounded and injective $B$, then $\overline{\operatorname{Im}B^{*}}=\Hcal$, which we can choose to be $D$ as in the proof of $(1)\implies (2)$.

Finally, note that there is a unitary $U: \overline{Im(A_{f_n}|_{D})}\to \overline{D}$ where $$U(A_{f_n}^{-1})^{*}=[A_{f_n}^{-1}(A_{f_n}^{-1})^{*}]^{\frac{1}{2}}.$$
\end{proof}

\begin{rem}
It is clear that if there is a $B\in B(\Hcal)$ such that $\{Bf_n\}$ is a Parseval frame, then $B$ is not unique since the image of a Parseval frame under any co-isometry is also a Parseval frame.
\end{rem}

\begin{rem}
By the uniform boundedness principle, a SCFC sequence is the same as a sequence such that there is a infinite dimensional closed subspace $D\subseteq \Hcal$ where $\{P_{D}f_n\}$ is a frame in $D$.
\end{rem}

\begin{cor}
Let $\{f_n\}_{n=0}^{\infty}\subseteq \Hcal$ be a Bessel sequence. Then $\{f_n\}$ is a CFC sequence if and only if there is a infinite-dimensional subspace $D\subseteq \Hcal$ and $A>0$ such that $$A\|f\|^{2}\leq \sum_{n=0}^{\infty}|\langle f, f_n\rangle|^{2}$$ for all $f\in D.$

Furthermore,

\begin{enumerate}
    \item If $\{f_n\}$ is a CFC sequence, then $\{f_n\}$ is a SCFC sequence.
    \item $\{f_n\}$ is an ICFC sequence if and only if $\{f_n\}$ is a frame.
\end{enumerate} 
\end{cor}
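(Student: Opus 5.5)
The plan is to specialize Theorem \ref{mainclassthm} to the Bessel case. If $\{f_n\}$ is Bessel with bound $K$, then $\operatorname{dom}(A_{f_n})=\Hcal$ and $A_{f_n}\in B(\Hcal,\ell^2(\mathbb{N}))$. So the finiteness requirement $\sum_n|\langle f,f_n\rangle|^2<\infty$ in condition (3) holds automatically. The equivalence in the first sentence of the corollary is then exactly (1)$\iff$(3) of Theorem \ref{mainclassthm}.

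For part (1), we start from an infinite-dimensional subspace $D$ on which $A\|f\|^2\le \|A_{f_n}f\|^2$ holds. Since $A_{f_n}$ is bounded, both sides are continuous in $f$, so the inequality passes to limits and holds on $\overline{D}$. The closure $\overline{D}$ is still infinite-dimensional, so we may replace $D$ by $\overline{D}$. The first clause of the ``Furthermore'' part of Theorem \ref{mainclassthm} then says that a closed $D$ yields an SCFC sequence.

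For part (2), suppose first that $\{f_n\}$ is a frame. Then condition (3) holds with $D=\Hcal$, so $\overline{D}=\Hcal$, and the second clause of the theorem gives ICFC. (Alternatively, $B=S^{-1/2}$ is injective and works directly.) For the converse, suppose $\{f_n\}$ is ICFC. The theorem provides $D$ with $\overline{D}=\Hcal$ satisfying the lower inequality. By the continuity argument from part (1), the inequality extends to $\overline{D}=\Hcal$. Together with the Bessel bound, this makes $\{f_n\}$ a frame.

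The only point needing care is the converse in (2). There I must make sure the theorem's ``may be chosen'' statement really supplies a single dense $D$ with one uniform constant $A$. The proof of (1)$\implies$(2) takes $D=\operatorname{Im}B^*$, and the estimate $\|B^*f\|^2\le\|B\|^2\sum_n|\langle B^*f,f_n\rangle|^2$ gives the constant $A=\|B\|^{-2}$ uniformly. Injectivity of $B$ makes $\operatorname{Im}B^*$ dense. As a direct check, if $B$ is injective, then for every $f$ we have $\|f\|^2=\sum_n|\langle B^*f,f_n\rangle|^2$. This identity yields the lower frame bound on the dense set $\operatorname{Im}B^*$, and continuity of $A_{f_n}$ finishes the argument.
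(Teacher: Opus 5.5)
Your proposal is correct and takes the approach the paper intends: the corollary is stated without a separate proof, as a direct specialization of Theorem \ref{mainclassthm}. There, boundedness of $A_{f_n}$ makes the finiteness condition automatic and lets the lower bound pass to $\overline{D}$ (SCFC via the closed-$D$ clause), and in the ICFC case extends the bound with constant $\|B\|^{-2}$ from the dense set $\operatorname{Im}B^{*}$ to all of $\Hcal$, giving a frame.
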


\begin{cor}
 Let $\{f_n\}_{n=0}^{\infty}\subseteq \Hcal$ be a lower-semi frame. Then $\{f_n\}$ is a CFC sequence if and and only if there is a infinite-dimensional subspace $D\subseteq \Hcal$ such that $$\sum_{n=0}^{\infty}|\langle f, f_n\rangle|^{2}<\infty$$ for all $f\in D.$
 
 \begin{enumerate}
 \item $\{f_n\}$ is a SCFC sequence if and only if there is an infinite dimensional closed subspace $D$ such that $\{P_{D}f_n\}$ is a Bessel sequence.
     \item $\{f_n\}$ is an ICFC sequence if and only if there is a dense subspace $D\subseteq \Hcal$ such that $$\sum_{n=0}^{\infty}|\langle f, f_n\rangle|^{2}<\infty$$ for all $f\in D.$
 \end{enumerate}  
\end{cor}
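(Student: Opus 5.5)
The statement to prove is the last corollary, for lower semi-frames. I will derive it from Theorem~\ref{mainclassthm} and its addenda, using the lower frame bound to supply the constant $A$ for free.

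\textbf{The main equivalence.} Suppose $\{f_n\}$ is a lower semi-frame with bound $A>0$. Then $A\|f\|^2\le \sum_n|\langle f,f_n\rangle|^2$ holds for every $f\in\Hcal$. So for any infinite-dimensional subspace $D$ with $\sum_n|\langle f,f_n\rangle|^2<\infty$ on $D$, condition (3) of Theorem~\ref{mainclassthm} holds with this same $A$, and $\{f_n\}$ is CFC. Conversely, if $\{f_n\}$ is CFC, condition (3) gives such a $D$ directly. Equivalently, the hypothesis says $\operatorname{dom}(A_{f_n})$ contains an infinite-dimensional subspace; since the domain is itself a subspace, this just says it is infinite dimensional.

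\textbf{Item (2), ICFC.} The addendum of Theorem~\ref{mainclassthm} says $\{f_n\}$ is ICFC if and only if the subspace $D$ in (3) may be taken dense. The lower bound is automatic, so this is exactly the existence of a dense $D$ on which the sums are finite. One small check: a dense subspace of the infinite-dimensional $\Hcal$ is infinite dimensional, so such a $D$ is admissible in (3).

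\textbf{Item (1), SCFC.} By the addendum, $\{f_n\}$ is SCFC if and only if there is a closed infinite-dimensional $D$ with $\sum_n|\langle f,f_n\rangle|^2<\infty$ for all $f\in D$, the lower bound again being automatic. It remains to show that finiteness of these sums on a closed $D$ is equivalent to $\{P_Df_n\}$ being Bessel in $D$. For $f\in D$ we have $\langle f,f_n\rangle=\langle f,P_Df_n\rangle$. Hence $\{P_Df_n\}$ Bessel in $D$ immediately gives finite sums on $D$. For the converse, assume all sums are finite on the closed subspace $D$. The truncated maps $f\mapsto\{\langle f,P_Df_n\rangle\}_{n\le N}$ are bounded operators from the Banach space $D$ into $\ell^2$, and they are pointwise bounded on $D$. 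The uniform boundedness principle then gives a uniform bound, i.e. a Bessel bound; this is the argument in the remark following Theorem~\ref{mainclassthm}. Alternatively, $A_{f_n}|_D$ is closed and everywhere defined on $D$, so the closed graph theorem gives the same conclusion.

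The only mild obstacle is this Bessel step, where closedness of $D$ is essential for uniform boundedness. Everything else is a direct substitution into Theorem~\ref{mainclassthm}.
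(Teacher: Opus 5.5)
Your proposal is correct and follows the paper's intended route. The paper gives no separate proof of this corollary: it is meant as an immediate consequence of Theorem~\ref{mainclassthm} and its addenda, with the lower semi-frame bound supplying the constant $A$ in condition (3). Item (1) then uses the uniform boundedness remark after that theorem to rephrase finiteness of the sums on a closed $D$ as $\{P_Df_n\}$ being Bessel, exactly as you do.
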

We will see soon that there exists lower-semi frames that are not CFC sequences.

Another key property of frames is that the class of Bessel sequences is exactly the class of sequences that can be extended to frames. Similarly, it is not hard to check using Theorem \ref{mainclassthm} there is a similar result for CFC sequences:

\begin{cor}\label{extcor}
A sequence $\{f_n\}_{n=0}^{\infty}\subseteq \Hcal$ can be extended to a CFC sequence if and only if $\operatorname{dom}(A_{f_n})$ contains an infinite dimensional subspace.
\end{cor}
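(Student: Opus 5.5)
We prove both directions using condition (3) of Theorem \ref{mainclassthm}. "Extended" means we adjoin further vectors $\{g_m\}$ to $\{f_n\}$, and the enlarged sequence is interleaved and indexed by $\mathbb{N}$. Interleaving does not change the value of $\sum|\langle f,\cdot\rangle|^2$.

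\emph{Necessity.} Suppose $\{f_n\}\cup\{g_m\}$ is a CFC sequence. By Theorem \ref{mainclassthm}(3) there are an infinite-dimensional subspace $D$ and $A>0$ such that
$$A\|f\|^2\le \sum_n|\langle f,f_n\rangle|^2+\sum_m|\langle f,g_m\rangle|^2<\infty \quad \text{for all } f\in D.$$
In particular $\sum_n|\langle f,f_n\rangle|^2<\infty$ for every $f\in D$, so $D\subseteq \operatorname{dom}(A_{f_n})$. This direction is immediate.

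\emph{Sufficiency.} Let $D\subseteq\operatorname{dom}(A_{f_n})$ be an infinite-dimensional subspace. The plan has three steps.

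First, we pass to a subspace with a convenient basis. Applying Gram--Schmidt to a countable linearly independent subset of $D$ gives an orthonormal sequence $\{e_k\}_{k\ge0}$ with $D_0:=\operatorname{span}\{e_k\}\subseteq D$. Since $\operatorname{dom}(A_{f_n})$ is a linear subspace, finite linear combinations stay in it, so every $f\in D_0$ still satisfies $\sum_n|\langle f,f_n\rangle|^2<\infty$.

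Second, we adjoin the vectors $g_k:=e_k$. For $f\in D_0$, Bessel's inequality and the fact that $f$ is a finite combination of the $e_k$ give
$$\sum_k|\langle f,e_k\rangle|^2=\|f\|^2 .$$
Hence for all $f\in D_0$,
$$\|f\|^2\le \sum_n|\langle f,f_n\rangle|^2+\sum_k|\langle f,e_k\rangle|^2<\infty .$$
This is exactly condition (3) for the extended sequence, with $A=1$ and the infinite-dimensional subspace $D_0$.

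Third, Theorem \ref{mainclassthm} applies, and the extended sequence is a CFC sequence.

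The only potential obstacle is that Theorem \ref{mainclassthm} requires the lower and upper inequalities on one common subspace. That is why we work on the algebraic span $D_0$ rather than its closure: finiteness of $\sum_n|\langle f,f_n\rangle|^2$ is only guaranteed on $\operatorname{dom}(A_{f_n})$, which need not be closed. Adjoining a whole orthonormal basis of $\Hcal$ would also work on the lower bound side, but restricting to $D_0$ keeps the argument clean.
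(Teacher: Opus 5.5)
Your proof is correct and is exactly the argument the paper leaves implicit when it says the corollary is ``not hard to check using Theorem~\ref{mainclassthm}''. Necessity holds because the square-sum over the extended sequence dominates the one over $\{f_n\}$ on the subspace given by condition (3). Sufficiency follows by adjoining an orthonormal system, so that the lower bound in (3) holds with $A=1$ on a subspace of $\operatorname{dom}(A_{f_n})$. One minor wording point: on $D_0$, the identity $\sum_k|\langle f,e_k\rangle|^2=\|f\|^2$ is Parseval's identity for a finite orthonormal expansion, not Bessel's inequality.
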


One of the main differences between frames and CFC sequences is that even ICFC sequences don't need to be complete:
\begin{thm}
There exists an incomplete ICFC sequence $\{f_n\}_{n=0}^{\infty}\subseteq \ell^{2}(\mathbb{N})$.
\end{thm}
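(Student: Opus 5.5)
We will build an explicit example in $\ell^{2}(\mathbb{N})$ and verify condition $(3)$ of Theorem \ref{mainclassthm} with a \emph{dense} subspace $D$. Part $(2)$ of the ``Furthermore'' clause of that theorem then gives that the sequence is ICFC. The idea is to leave out one direction, $e_0$, from the sequence, which makes it incomplete. We then compensate by choosing $D$ so that it meets $\operatorname{span}\{e_0\}$ only in $0$, yet has closure containing $e_0$.

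Let $\{e_k\}_{k=0}^{\infty}$ be the standard orthonormal basis. Define $f_n:=(n+1)e_{n+1}$ for $n\geq 0$. Every $f_n$ is orthogonal to $e_0$, so $\overline{\operatorname{span}\{f_n\}}=\{e_0\}^{\perp}\neq \ell^{2}(\mathbb{N})$, and the sequence is incomplete. Let
$$D:=\operatorname{span}\{e_k+e_0: k\geq 1\}.$$
For a finite combination $v=\sum_{k\geq 1} a_k(e_k+e_0)$ we have $\langle v,f_{k-1}\rangle = k a_k$, up to conjugation conventions. Since $v$ is a finite sum, $\sum_n|\langle v,f_n\rangle|^2=\sum_k k^2|a_k|^2<\infty$.

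The key estimate is the lower bound. We have $\|v\|^2=\sum_k|a_k|^2+\left|\sum_k a_k\right|^2$. By Cauchy--Schwarz,
$$\left|\sum_k a_k\right|^2\leq \left(\sum_k k^2|a_k|^2\right)\left(\sum_k k^{-2}\right).$$
Also $\sum_k|a_k|^2\leq \sum_k k^2|a_k|^2$. Together these give
$$\|v\|^2\leq \left(1+\tfrac{\pi^2}{6}\right)\sum_n |\langle v,f_n\rangle|^2 ,$$
so $(3)$ holds with $A=(1+\pi^2/6)^{-1}$. The space $D$ is infinite dimensional, since the vectors $e_k+e_0$ are linearly independent.

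The remaining step, and the only one requiring some care, is density of $D$. First,
$$\frac1N\sum_{k=1}^{N}(e_k+e_0)=e_0+\frac1N\sum_{k=1}^N e_k ,$$
and the last term has norm $N^{-1/2}\to 0$. Hence $e_0\in\overline{D}$. Consequently each $e_k=(e_k+e_0)-e_0$ lies in $\overline{D}$, so $\overline{D}=\ell^{2}(\mathbb{N})$. Theorem \ref{mainclassthm} now yields a bounded injective $B$ with $\{Bf_n\}$ a Parseval frame, while $\{f_n\}$ is not complete.
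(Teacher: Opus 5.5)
Your proposal is correct and is essentially the paper's proof. You use the same sequence $n e_n$, merely dropping the paper's zero vector $f_0=0$ and reindexing. Your dense subspace consists of finitely supported vectors whose $e_0$-coordinate is tied to the sum of the others; it is the paper's $D$ after flipping the sign of that coordinate. The Cauchy--Schwarz bound with constant $1+\pi^2/6$ is the same, as is the appeal to part (2) of the ``Furthermore'' clause of Theorem \ref{mainclassthm}. Your density step (averaging to reach $e_0$, then subtracting) is only a repackaging of the paper's approximation of each $e_j$ by spreading $-1/r$ over $r$ coordinates.
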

\begin{proof}
Let $\{e_n\}_{n=0}^{\infty}\subseteq \ell^{2}(\mathbb{N})$ denote the standard orthonormal basis. Define $f_0=0$ and $f_n=ne_{n}$ for $n\geq 1$. Clearly $\{f_n\}$ is not complete. 

However, define 
$$D:=\left\{ \left\{a(k) \right\}_{k=0}^{\infty}\in c_{00}:\sum_{k=0}^{\infty}a(k)=0 \right\}$$where $c_{00}$ denotes sequences in $\ell^{2}(\mathbb{N})$ with finite support. Let $j\geq0,r>0$ be arbitrary and define $a^{j}(k)\in D$ by $a^{j}(j)=1$ and $a^{j}(k)=-\frac{1}{r}$ for any $r$ choices of $k\neq j$. It is easy to see that $\|e_{j}-a^{j}(k)\|^{2}=\frac{1}{r}$. Thus, $\overline{D}=\ell^{2}
(\mathbb{N})$. 

Now consider for any $a(k)\in D$, 
$$\sum_{k=0}^{\infty}|a(k)|^{2}=\left|\sum_{k=1}^{\infty}-a(k) \right|^{2}+\sum_{k=1}^{\infty}|a(k)|^{2}\leq $$
$$\left(\sum_{k=1}^{\infty}\frac{1}{k^{2}} \right)\left(\sum_{k=1}^{\infty}k^{2}|a(k)|^{2} \right)+\sum_{k=1}^{\infty}|a(k)|^{2}\leq \left(\frac{\pi^{2}}{6}+1 \right)\left(\sum_{k=1}^{\infty}k^{2}|a(k)|^{2} \right)<\infty.$$
It follows that $\{f_n\}$ is a ICFC sequence by Theorem \ref{mainclassthm}.
\end{proof}
Another property that differs between frames and CFC sequences is that a frame that is also a basis has the property that every subsequence is a frame sequence. The following example shows that even unconditional Schauder bases that are also SCFC sequences can have all subsequences fail to be frame sequences.
\begin{thm}
There exists an unconditional Schauder basis $\{f_n\}_{n=0}^{\infty}\subseteq \ell^{2} (\mathbb{N})$ that is a SCFC sequence, but no subsequence of $\{f_n\}_{n=0}^{\infty}$ is a frame sequence.
\end{thm}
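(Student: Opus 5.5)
The plan is to build $\{f_n\}$ out of $2\times 2$ blocks along an orthonormal basis, with one ``large'' and one ``small'' vector in each block. The block-diagonal structure will make the system an unconditional basis. The norms will be forced to go to $\infty$ on one parity and to $0$ on the other, and this is what kills every infinite subsequence. Throughout, ``subsequence'' means an infinite subsequence, since a finite set is trivially a frame for its span. Let $\{e_n\}$ be the standard orthonormal basis of $\ell^2(\mathbb{N})$. For $n\ge 1$ define
$$f_{2n}=n\,e_{2n}+e_{2n+1},\qquad f_{2n+1}=\tfrac1n\,e_{2n+1},$$
and put $f_0=e_0$, $f_1=e_1$.

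\textbf{Step 1: unconditional Schauder basis.} On $\operatorname{span}\{e_{2n},e_{2n+1}\}$ the block of $\{f_{2n},f_{2n+1}\}$ is the matrix $\begin{pmatrix} n&1\\0&1/n\end{pmatrix}$, which is invertible. Inverting it gives the biorthogonal system
$$g_{2n}=\tfrac1n e_{2n},\qquad g_{2n+1}=-e_{2n}+n\,e_{2n+1},$$
with $\langle f_j,g_k\rangle=\delta_{jk}$. The rank-one coordinate projections $f\mapsto\langle f,g_k\rangle f_k$ have norm $\|g_k\|\,\|f_k\|$. For both parities this equals $\sqrt{n^2+1}/n\le\sqrt2$.

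Now take any set $I$ of indices and consider the partial projection $\sum_{k\in I}\langle f,g_k\rangle f_k$. It acts blockwise on mutually orthogonal two-dimensional blocks. Inside each block it is a sum of at most two coordinate projections, so its norm there is at most $2\sqrt2$. Hence the projection has norm at most $2\sqrt2$ uniformly in $I$. This gives unconditional convergence of $f=\sum\langle f,g_k\rangle f_k$; completeness is clear from the block structure.

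\textbf{Step 2: SCFC.} Take the closed infinite-dimensional subspace $D=\overline{\operatorname{span}}\{e_{2n+1}:n\ge1\}$. For $f=\sum_n c_n e_{2n+1}\in D$ we have
$$\sum_k|\langle f,f_k\rangle|^2=\sum_n |c_n|^2\bigl(1+\tfrac1{n^2}\bigr),$$
which lies between $\|f\|^2$ and $2\|f\|^2$. So condition (3) of Theorem \ref{mainclassthm} holds with $D$ closed, and by the ``furthermore'' part (1) the sequence is SCFC. Equivalently, $\{P_Df_n\}$ is a frame in $D$, as in the remark after the theorem.

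\textbf{Step 3: no subsequence is a frame sequence.} This is the main point. Let $\{f_{n_k}\}$ be an infinite subsequence. It is an unconditional Schauder sequence, because the uniform bound on the projections from Step 1 restricts to subsets of indices. Suppose it were also a frame sequence. Then in its closed span it would be both a frame and a Schauder basis, hence a Riesz basis of that span by the Remark in Section 2. Gohberg's theorem would then force $0<\inf_k\|f_{n_k}\|\le\sup_k\|f_{n_k}\|<\infty$.

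But $\|f_{2n}\|=\sqrt{n^2+1}\to\infty$ and $\|f_{2n+1}\|=1/n\to0$. An infinite subsequence contains infinitely many indices of one parity, so its norms are either unbounded or not bounded below. This is a contradiction, so no infinite subsequence is a frame sequence.

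The only delicate point is the uniformity in Step 1. The individual coordinate projections have to stay bounded even though $\|f_k\|$ blows up or vanishes, and that is exactly why $f_{2n}$ and $g_{2n+1}$ are chosen so that $\|g_k\|\,\|f_k\|$ stays bounded.
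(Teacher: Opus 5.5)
Your proof is correct, but your example is different from the paper's. The paper uses a diagonal system $f_n=a_ne_n$, a weighted orthonormal basis. Its weights come in blocks of length $k$ of the form $\sqrt{k},1/\sqrt{k},\dots,1/\sqrt{k}$. The paper takes $D$ to be the range of the analysis operator of the Parseval frame in which $e_k/\sqrt{k}$ is repeated $k$ times, i.e.\ the vectors that are constant on each block. The frame property of $\{P_Da_ne_n\}$ in $D$ then comes from averaging one large weight against many small ones inside each block. You instead keep $D$ a coordinate subspace and make the basis non-orthogonal in fixed $2\times 2$ blocks, tilted so that the large vector $f_{2n}$ projects to a unit vector in $D$.

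These are two responses to the same obstruction. For a diagonal system, a coordinate subspace $D$ gives a frame $\{P_Df_n\}$ only if the weights on those coordinates are bounded above and below, and that would produce a Riesz subsequence. So one must tilt either $D$ (the paper) or the vectors (you).

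What each choice buys:
\begin{itemize}
\item The paper's choice makes the unconditional basis property free, since the system is orthogonal, and it ties in with the later discussion of weighted frames. The cost is a more elaborate $D$ and a Bessel-bound computation.
\item Your choice makes the frame bounds in $D$ a one-line calculation. The cost is checking unconditionality through the uniform bound $\|f_k\|\,\|g_k\|\le\sqrt2$. You could shortcut this: $\{f_k/\|f_k\|\}$ is already a Riesz basis, because the block Gram matrices have eigenvalues $1\pm(n^2+1)^{-1/2}$.
\end{itemize}

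The last step is the same in both proofs. A frame sequence with a biorthogonal system is a Riesz sequence, hence norm-bounded above and below. This fails because along any infinite subsequence the norms tend to $0$ or to $\infty$.
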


\begin{proof}
Consider the standard orthonormal basis $\{e_n\}_{n=0}^{\infty}\subseteq \ell^{2}(\mathbb{N})$. Now define $\{g_n\}_{n=0}^{\infty}$ to be the following Parseval frame:
$$\left\{e_0, e_1, \frac{e_2}{\sqrt{2}},\frac{e_2}{\sqrt{2}}, \frac{e_3}{\sqrt{3}},\frac{e_3}{\sqrt{3}},\frac{e_3}{\sqrt{3}},\dots \right\}.$$ Let $D=\operatorname{Im}(A_{g_n})$, and define $\{a_n\}_{n=0}^{\infty}$ to be the following sequence of positive integers:
$$\left\{1, 1, \sqrt{2}, \frac{1}{\sqrt{2}}, \sqrt{3}, \frac{1}{\sqrt{3}},\frac{1}{\sqrt{3}},\dots \right\}.$$ It is easy to check that $\{a_ng_n\}_{n=0}^{\infty}$ is a Bessel sequence with bound $2$.

We claim that $\{P_{D}a_ne_n\}\subseteq D$ is a frame in $D$, and no subsequence of $\{a_ne_n\}_{n=0}^{\infty}$ is a frame sequence. First note that for $f\in D$,
$$\|f\|^2\leq \sum_{n=0}^{\infty}|\langle f, a_ne_n\rangle|^{2}=\sum_{n=0}^{\infty}|a_n|^{2}|\langle g, g_n\rangle|^{2}\leq 2\|g\|^{2} =2\|f\|^{2}$$ for some $g\in \ell^{2}(\mathbb{N})$.

Now suppose for the sake of contradiction that $\{a_{n_k}e_{n_k}\}_{k=0}^{\infty}$ is a frame sequence for some subsequence. Then $\{a_{n_k}e_{n_k}\}$ must be a Riesz sequence since it is biorthogonal to $\{a_{n_k}^{-1}e_{n_k}\}$. Therefore, $\{a_{n_k}e_{n_k}\}$ must be norm bounded above and below, which is a contradiction.
\end{proof}

\section{Continuously orthonormalizable sequences}\label{4}

In \cite{Young2001Introduction}, Young proved that sequences $\{f_n\}_{n=0}^{\infty}$ whose analysis operators are surjective are exactly sequences with the following property: There exists an $A>0$ such that

$$A \sum_{n=0}^{k}|a_n|^2 \leq \left\| \sum_{n=0}^{k} a_n f_n  \right\|^2$$
for all finite sequences $\{a_n\}_{n=0}^{\infty}$. Sequences that satisfy this inequality are called \textbf{Riesz-Fischer sequences}. There are also results on Riesz-Fischer sequences of exponentials in \cite{Reid1995Class}. For more on Riesz-Fischer sequences, the reader can see \cite{Casazza2002Riesz-Fischer, Young1998Class}.

A natural strengthening of the CFC condition is to require that the transformed sequence $\{Bf_n\}$ is not merely a Parseval frame, but an orthonormal basis. Such sequences are, in a sense, very “Gram–Schmidt-friendly”: there exists a bounded linear operator that reshapes them into an orthonormal basis in one step, without the step-by-step procedure of the classical Gram–Schmidt algorithm. We call sequences with this property \textbf{continuously orthonormalizable (CO).} Similarly, if $B$ can be chosen to be injective or surjective, we will say $\{f_n\}$ is an \textbf{ICO or SCO sequence} respectively. However, it turns out that $CO$ sequences are exactly Riesz-Fischer sequences, which we will show here. This result is know in \cite{Casazza2002Riesz-Fischer}, but our classification result is also in terms on the analysis operator as a result of Theorem \ref{mainclassthm}.

\begin{cor}\label{COcor}
The following are equivalent:
\begin{enumerate}
    \item $\{f_n\}_{n=0}^{\infty}\subseteq \Hcal$ is a CO sequence.
    \item $A_{f_n}$ is surjective.
    \item There is an infinite-dimensional subspace $D\subseteq \Hcal$ and $A>0$ such that
$$A\|f\|^2 \leq \sum_{n=0}^{\infty}|\langle f, f_n\rangle|^{2}<\infty $$
for all $f\in D$, and there is a $\{g_{n}\}\subseteq D$ such that
$$\langle f_n,g_k\rangle=\delta_{nk}.$$
\item $\{f_n\}_{n=0}^{\infty}\subseteq \Hcal$ is a Riesz-Fischer sequence.
\end{enumerate}

Furthermore assuming $(3)$,
\begin{enumerate}
    \item If $D$ is closed, $\{f_n\}_{n=0}^{\infty}\subseteq \Hcal$ is a SCO sequence. Conversely, if $\{f_n\}_{n=0}^{\infty}\subseteq \Hcal$ is a SCO sequence, $D$ may be chosen to be closed.
    \item If $\overline{D}=\Hcal$, $\{f_n\}_{n=0}^{\infty}\subseteq \Hcal$ is an ICO sequence. Conversely, if \newline $\{f_n\}_{n=0}^{\infty}\subseteq \Hcal$ is an ICO sequence, $D$ may be chosen so that $\overline{D}=\mathcal{H}.$
    \item 
   $\left\{(TT^{*})^{\frac{1}{2}}f_n \right\}\subseteq \overline{D}$ is an orthonormal basis in $\overline{D}$ where \newline $T\in B(\ell^2(\mathbb{N}), \Hcal)$.
\end{enumerate}
\end{cor}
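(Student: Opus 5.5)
I will prove Corollary \ref{COcor} by running the cycle $(1)\Rightarrow(2)\Rightarrow(3)\Rightarrow(1)$ and then attaching $(4)$ through Young's theorem. The argument reuses the proof of Theorem \ref{mainclassthm}. The one new ingredient is that an orthonormal basis $\{Bf_n\}$ forces the analysis operator on $\operatorname{Im}(B^*)$ to be onto all of $\ell^2(\mathbb{N})$, not merely onto an infinite dimensional closed subspace. The equivalence $(2)\iff(4)$ is exactly Young's result quoted above, so it needs no further work.

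\emph{Step $(1)\Rightarrow(2)$.} Suppose $\{Bf_n\}$ is an orthonormal basis for some $B\in B(\Hcal)$. For any $c\in\ell^2(\mathbb{N})$, set $h=\sum_k \overline{c_k}\,Bf_k$, with the convention for conjugates matched to $\langle\cdot,\cdot\rangle$. Then
$$\langle B^*h, f_n\rangle=\langle h,Bf_n\rangle=c_n,$$
so $A_{f_n}(B^*h)=c$ and $A_{f_n}$ is surjective.

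\emph{Step $(2)\Rightarrow(3)$.} Take $C=\ell^2(\mathbb{N})$ in the construction of $(2)\Rightarrow(3)$ of Theorem \ref{mainclassthm}, with
$$D=\{f\in\overline{\operatorname{span}\{f_n\}}: A_{f_n}f\in\ell^2\}.$$
As shown there, $A_{f_n}|_D$ is a closed bijection onto $\ell^2$, and its inverse is bounded, which gives the lower bound. For the biorthogonal system, put $g_k=(A_{f_n}|_D)^{-1}e_k\in D$. Then $\langle g_k,f_n\rangle=\delta_{nk}$, and hence $\langle f_n,g_k\rangle=\delta_{nk}$. One point needs checking: surjectivity of $A_{f_n}$ on $\operatorname{dom}(A_{f_n})$ passes to $D$. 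This follows because projecting onto $\overline{\operatorname{span}\{f_n\}}$ does not change the inner products with the $f_n$.

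\emph{Step $(3)\Rightarrow(1)$.} Follow $(3)\Rightarrow(1)$ of Theorem \ref{mainclassthm}. This yields a bounded operator $(A_{f_n}^{-1})^*$ such that $\{(A_{f_n}^{-1})^*f_n\}$ is a Parseval frame for $\overline{\operatorname{Im}(A_{f_n}|_D)}$. The biorthogonality hypothesis gives $e_k=A_{f_n}g_k\in\operatorname{Im}(A_{f_n}|_D)$, so this closure is all of $\ell^2$. Moreover, for each $k$,
$$\langle e_k,(A_{f_n}^{-1})^*f_n\rangle=\langle g_k,f_n\rangle=\delta_{kn}.$$
So the Parseval frame is biorthogonal to the orthonormal basis $\{e_k\}$, and hence equals it. Composing with a unitary $U:\ell^2\to\Hcal$ shows that $\{f_n\}$ is a CO sequence.

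The three addenda are obtained exactly as in Theorem \ref{mainclassthm}. Closed $D$ gives closed range of $(A_{f_n}^{-1})^*$, so $B$ is surjective. Dense $D$ gives an injective adjoint. The converses take $D=\operatorname{Im}B^*$. The formula with $T=A_{f_n}^{-1}\in B(\ell^2(\mathbb{N}),\Hcal)$ comes from the polar decomposition, as before, now with $C=\ell^2(\mathbb{N})$.

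The main obstacle is Step $(3)\Rightarrow(1)$. One must see that the biorthogonal family $\{g_k\}$ lying in $D$ forces the range to be everything. One must also pin down the identification of the Parseval frame with $\{e_n\}$, and get conjugation conventions right there. If that identification is delicate, the fallback is to verify the orthonormality directly: check $\langle (A_{f_n}^{-1})^*f_n,(A_{f_n}^{-1})^*f_m\rangle=\delta_{nm}$ using $\|(A_{f_n}^{-1})^*f_n\|\le 1$ for a Parseval frame, together with $\langle e_n,(A_{f_n}^{-1})^*f_n\rangle=1$.
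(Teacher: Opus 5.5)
Your proof is correct, and its core matches the paper's. For $(1)\Rightarrow(2)$ the paper notes $A_{f_n}(B^{*}Bf_k)=e_k$ and then uses the closed range from Theorem \ref{mainclassthm}. You instead write down the preimage $B^{*}h$ of an arbitrary $c\in\ell^2(\mathbb{N})$ directly; with the paper's convention (linear in the first slot) take $h=\sum_k c_kBf_k$, without the conjugate. Your $(2)\Rightarrow(3)$ is the same construction with $C=\ell^2(\mathbb{N})$. Your identification $(A_{f_n}^{-1})^{*}f_n=e_n$ from $A_{f_n}g_k=e_k$ is the paper's ``biorthogonal to itself'' step, stated more cleanly; the Parseval property is not even needed there.

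The genuine difference is how $(4)$ is attached. You close the cycle at $(1)$ and import $(2)\Leftrightarrow(4)$ from Young's theorem. The paper instead runs $(3)\Rightarrow(4)\Rightarrow(1)$ directly:
\begin{itemize}
\item Once $\{Bf_n\}$ is an orthonormal basis, $\sum_n|a_n|^2=\|\sum_n a_nBf_n\|^2\le\|B\|^2\|\sum_n a_nf_n\|^2$ gives the Riesz--Fischer inequality.
\item Conversely, that inequality makes $f_n\mapsto e_n$ a well-defined bounded map on $\operatorname{span}\{f_n\}$. It extends to $\overline{\operatorname{span}\{f_n\}}$, and by zero on its orthogonal complement.
\end{itemize}
The paper's route is self-contained. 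Combined with $(1)\Leftrightarrow(2)$, it actually reproves Young's characterization rather than assuming it. Yours is shorter but uses Young as a black box. That is legitimate, since the paper quotes the result just before the corollary. Still, both missing implications are one-liners, so you could include them at no cost and make the argument independent of Young.
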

\begin{proof}
$(1)\implies (2)$

Following the proof of $(1)\implies (2)$ from Theorem \ref{mainclassthm}, in this case, $\operatorname{Im}(A_{f_n}|_{\operatorname{Im}(B^{*})})$ contains sequences of finite support since 
$$\langle B^{*}Bf_n,f_k\rangle=\delta_{nk}.$$
Statement $(2)$ follows since we have already shown $\operatorname{Im}(A_{f_n}|_{\operatorname{Im}(B^{*}}))$ is closed.

$(2)\implies (3)$

This easily follows from the proof of $(2)\implies (3)$ from Theorem \ref{mainclassthm} and because $$\langle B^{*}Bf_n,f_k\rangle=\delta_{nk}.$$

$(3)\implies (4)$

Following the proof of $(3)\implies (1)$ from Theorem \ref{mainclassthm},
$\left\{(A^{-1}_{f_n})^{*}f_n \right\}$ is a Parseval frame in $\ell^{2}(\mathbb{N})$ since $\{g_n\}\subseteq D$. In particular, the frame operator associated with $\left\{(A^{-1}_{f_n})^{*}f_n \right\}$  is the identity operator.
Furthermore, 
$$\left\langle (A^{-1}_{f_n})^{*}f_j,A_{f_n}g_k\right\rangle= \delta_{jk}.$$
Then it follows from the frame reconstruction property that $\left\{(A^{-1}_{f_n})^{*}f_n \right\}$ is biorthogonal to itself and is therefore an orthonormal basis. In particular, $\{Bf_n\}$ is an orthonormal basis for some $B\in B(H)$. Now observe that
$$\left\|\sum_{n=0}^{k}a_n Bf_n \right \|^2\leq \|B\|^2 \left\|\sum_{n=0}^{k}a_n f_n \right \|^2$$ for all finite sequences $\{a_n\}_{n=0}^{\infty}.$ Now statement $(4)$ follows since $\{Bf_n\}$ is an orthogonal sequence.

$(4)\implies (1)$

Let $\{e_n\}_{n=0}^{\infty}\subseteq \Hcal$ denote any orthonormal basis. It follows from the Riesz-Fischer inequality that the linear map defined by $f_n \to e_n$ on the $\operatorname{span}\{f_n\}$ is bounded.

The other three statements follow very similarly to the proof of Theorem \ref{mainclassthm}.
\end{proof}

\begin{cor}
The following are equivalent:
\begin{enumerate}
    \item $\{f_n\}_{n=0}^{\infty}\subseteq \Hcal$ is a SCO sequence.
    \item There is an infinite dimensional closed subspace $D$ such that \newline $\{P_{D}f_n\}_{n=0}^{\infty}$ is a Riesz basis in $D$.
\end{enumerate}
\end{cor}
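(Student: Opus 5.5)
We prove the two implications separately, following the pattern of the remark after Theorem \ref{mainclassthm}, which says that SCFC sequences are exactly those with a closed infinite-dimensional $D$ for which $\{P_D f_n\}$ is a frame in $D$. The extra ingredient is the biorthogonality that separates CO sequences from CFC sequences (Corollary \ref{COcor}).

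$(1)\implies(2)$. Let $B\in B(\Hcal)$ be surjective with $\{Bf_n\}$ an orthonormal basis, and set $D:=\operatorname{Im}(B^{*})$. Since $B$ is surjective, $B^{*}$ is bounded below, so $D$ is closed and infinite dimensional; this is the closed $D$ provided by the SCO clause of Corollary \ref{COcor}. For $f\in D$ write $f=B^{*}g$. Then
$$\sum_n|\langle f,P_Df_n\rangle|^2=\sum_n|\langle g,Bf_n\rangle|^2=\|g\|^2 .$$
The quantity $\|g\|^2$ is comparable to $\|f\|^2$, because $B^{*}$ is bounded and bounded below. Hence $\{P_Df_n\}$ is a frame in $D$. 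To see that it is a Riesz basis, set $h_k:=B^{*}Bf_k\in D$. Then
$$\langle P_Df_n,h_k\rangle=\langle f_n,B^{*}Bf_k\rangle=\langle Bf_n,Bf_k\rangle=\delta_{nk}.$$
So the frame $\{P_Df_n\}$ has a biorthogonal sequence in $D$, which is exactly the definition of a Riesz basis in $D$ used in the Preliminaries.

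$(2)\implies(1)$. Let $\{P_Df_n\}$ be a Riesz basis of $D$. Then there is an invertible $V\in B(D)$ such that $\{VP_Df_n\}$ is an orthonormal basis of $D$; for instance take $V=S_D^{-1/2}$, where $S_D$ is the frame operator of $\{P_Df_n\}$ in $D$. Choose a unitary $U:D\to\Hcal$, which exists because $D$ is infinite dimensional and $\Hcal$ is separable, and put $B:=UVP_D\in B(\Hcal)$. Then $Bf_n=UVP_Df_n$, so $\{Bf_n\}$ is an orthonormal basis of $\Hcal$. Moreover $B$ is surjective, since $P_D$ maps onto $D$ and $UV:D\to\Hcal$ is bijective. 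Therefore $\{f_n\}$ is SCO.

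Neither direction has a real obstacle. The only points needing care are in $(1)\implies(2)$: checking that $\operatorname{Im}B^{*}$ is closed (surjectivity of $B$ plus the closed range theorem), and checking that the biorthogonal sequence $\{h_k\}$ actually lies in $D$. The latter holds because each $h_k$ is of the form $B^{*}(\cdot)$.
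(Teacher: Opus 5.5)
Your proof is correct, and it is essentially the argument the paper intends. The paper states this corollary without proof, as a direct consequence of Corollary \ref{COcor} (the closed choice $D=\operatorname{Im}B^{*}$ with biorthogonal system $\{B^{*}Bf_k\}\subseteq D$) combined with the remark that SCFC sequences are those with $\{P_Df_n\}$ a frame in a closed infinite-dimensional $D$, and the converse via $B=UVP_D$ is likewise the intended route; you have made that combination explicit.
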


We will make good use of the follow result, which is known in \cite{Casazza2002Riesz-Fischer}:

\begin{cor*}[P. Casazza et al.]\label{RFcor}
A Riesz-Fischer sequence $\{f_n\}\subseteq \Hcal$ is complete if and only if it is a lower-semi frame.
\end{cor*}

Finally, we close this section with a Paley-Wiener type result for CFC and CO sequences, which utilizes the results of Christensen in \cite{Christensen1995Paley-Wiener} and Young in \cite{Young1980Nonharmonic}.

\begin{prop}
Suppose that $\{f_n\}_{n=0}^{\infty}\subseteq \Hcal$ is such that $\{Bf_n\}$ is a Parseval frame for some $B\in B(\Hcal)$. Suppose further that $\{f_n-g_n\}$ is a Bessel sequence with bound $\mathcal{B}<\frac{1}{\|B\|^2}$, then $\{g_n\}$ is a CFC sequence.

\begin{enumerate}
    \item Furthermore, if $\{f_n\}$ is a SCFC sequence or an ICFC sequence, then so is $\{g_n\}$.
    \item If in addition $\{f_n\}$ is a CO sequence, SCO sequence, or ICO sequence, so is $\{g_n\}$.
\end{enumerate}

\end{prop}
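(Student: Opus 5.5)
The plan is a perturbation argument on the subspace $D=\operatorname{Im}(B^{*})$ produced in the proof of $(1)\implies(2)$ of Theorem \ref{mainclassthm}, followed by an application of condition $(3)$ of that theorem to $\{g_n\}$.

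\textbf{Lower bound for $\{g_n\}$ on $D$.} Take $f=B^{*}h\in D$. By \eqref{analysisopeq},
$$\|A_{f_n}f\|=\|h\|\geq \frac{1}{\|B\|}\|f\|,$$
and $A_{f_n}f\in\ell^2(\mathbb{N})$. Since $\{f_n-g_n\}$ is Bessel with bound $\mathcal{B}$, we have $\|A_{f_n-g_n}f\|\leq \sqrt{\mathcal{B}}\|f\|$. Because $A_{g_n}f=A_{f_n}f-A_{f_n-g_n}f$, this gives $A_{g_n}f\in\ell^2(\mathbb{N})$ and
$$\|A_{g_n}f\|\geq \left(\frac{1}{\|B\|}-\sqrt{\mathcal{B}}\right)\|f\|.$$
The constant in parentheses is positive exactly because $\mathcal{B}<\|B\|^{-2}$. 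Hence $(3)$ of Theorem \ref{mainclassthm} holds for $\{g_n\}$ with the same infinite-dimensional $D$, and $\{g_n\}$ is a CFC sequence.

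\textbf{Part (1).} We reuse the converse halves of the ``Furthermore'' clauses of Theorem \ref{mainclassthm}. If $B$ is surjective, $\operatorname{Im}B^{*}$ is closed, so $D$ is closed and $\{g_n\}$ is SCFC. If $B$ is injective, $\overline{\operatorname{Im}B^{*}}=\Hcal$, so $D$ is dense and $\{g_n\}$ is ICFC. Since the same $D$ works for $\{g_n\}$, no new argument is needed. The one point to check is that the direct clauses (``if $D$ is closed/dense then SCFC/ICFC'') in Theorem \ref{mainclassthm} really use only the inequality in $(3)$ on $D$, and not the specific construction of $D$ inside $\overline{\operatorname{span}\{f_n\}}$. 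On reading the proof of $(3)\implies(1)$, only the inequality is used, because $A_{g_n}|_D$ is then injective with bounded inverse on its range.

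\textbf{Part (2).} Here I use Corollary \ref{COcor}. If $\{f_n\}$ is CO, then $A_{f_n}|_D$ is onto $\ell^2(\mathbb{N})$, with $D=\operatorname{Im}B^{*}$ (proof of $(1)\implies(2)$ there). On $D$, write
$$A_{g_n}=A_{f_n}-A_{f_n-g_n}=A_{f_n}\bigl(I-(A_{f_n}|_D)^{-1}A_{f_n-g_n}\bigr).$$
Carrying this out with bounded operators on the closure requires care, so I would instead argue through the Riesz--Fischer property. A standard Paley--Wiener/Christensen perturbation shows that if $\{f_n\}$ is Riesz--Fischer with constant $A=\|B\|^{-2}$, meaning
$$\Bigl\|\sum a_nf_n\Bigr\|\geq \|B\|^{-1}\|a\|,$$
which follows from $\{Bf_n\}$ being orthonormal, then
$$\Bigl\|\sum a_ng_n\Bigr\|\geq \Bigl\|\sum a_nf_n\Bigr\|-\Bigl\|\sum a_n(f_n-g_n)\Bigr\|\geq\bigl(\|B\|^{-1}-\sqrt{\mathcal{B}}\bigr)\|a\|.$$
Here the synthesis bound $\|\sum a_n(f_n-g_n)\|\leq\sqrt{\mathcal{B}}\|a\|$ comes from the Bessel property. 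So $\{g_n\}$ is Riesz--Fischer, hence CO by Corollary \ref{COcor}.

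For SCO and ICO, I would combine this with the fact that the subspace $D$ in $(3)$ of Corollary \ref{COcor} can be kept, namely closed or dense. The biorthogonal sequence for $\{g_n\}$ must be found inside $D$. To get it, I show that $A_{g_n}|_D$ is onto $\ell^2(\mathbb{N})$, and then take $g'_k=(A_{g_n}|_D)^{-1}e_k$. Surjectivity is the main obstacle. I would first try the following: $A_{f_n}|_{D}$ is an isomorphism of the Banach space $\overline{D}$ (closed case) onto $\ell^2(\mathbb{N})$ with inverse norm at most $\|B\|$, and $A_{f_n-g_n}$ has norm at most $\sqrt{\mathcal{B}}$. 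A Neumann series then makes $A_{g_n}|_{\overline D}$ surjective, because $\sqrt{\mathcal{B}}\,\|B\|<1$. In the dense, non-closed case, I would transport the problem to $\ell^2(\mathbb{N})$ via the bounded inverse, and argue in the same way on the completion.
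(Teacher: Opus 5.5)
Your argument is correct, but it takes a different route from the paper's. The paper pushes everything forward by $B$. Since $\{B(f_n-g_n)\}$ is Bessel with bound $\mathcal{B}\|B\|^2<1$, Christensen's Paley--Wiener theorem for frames makes $\{Bg_n\}$ a frame. When $\{Bf_n\}$ is an orthonormal basis, the classical Paley--Wiener theorem makes $\{Bg_n\}$ a Riesz basis. Composing $B$ with the invertible operator $S^{-1/2}$, or with a similarity $T$, then converts $\{g_n\}$, and injectivity or surjectivity is inherited from $B$ for free.

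You instead stay on the analysis side. You show that the subspace $D=\operatorname{Im}B^{*}$ witnessing the CFC property of $\{f_n\}$ also witnesses condition (3) of Theorem \ref{mainclassthm} for $\{g_n\}$, with explicit constant $(\|B\|^{-1}-\sqrt{\mathcal{B}})^2$. The SCFC and ICFC cases then follow because closedness or density of $D$ is unchanged. CO follows from the Riesz--Fischer perturbation together with Corollary \ref{COcor}. In effect you re-prove the lower-bound halves of the two perturbation theorems, which keeps the argument inside the paper's own classification. The price is the extra surjectivity step for SCO and ICO, which the paper avoids entirely through the similarity trick.

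Your Neumann-series step is sound, and the dense case needs no completion. Let $U$ be the unitary analysis operator of the orthonormal basis $\{Bf_n\}$. Then $T=B^{*}U^{-1}$ is a bounded bijection of $\ell^2(\mathbb{N})$ onto $D$ with $A_{f_n}T=I$. It follows that $A_{g_n}T=I-A_{f_n-g_n}T$, where $\|A_{f_n-g_n}T\|\le\sqrt{\mathcal{B}}\,\|B\|<1$. Hence $A_{g_n}|_D$ maps $D$ bijectively onto $\ell^2(\mathbb{N})$, and the biorthogonal system $T(I-A_{f_n-g_n}T)^{-1}e_k$ lies in $D$. This single computation handles the closed case as well.
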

\begin{proof}
It easily follows that $\{Bf_n-Bg_n\}$ is a Bessel sequence with bound $$\mathcal{B}\|B\|^2<1.$$ Now using the result of \cite{Christensen1995Paley-Wiener}, $\{Bg_n\}$ is a frame. Now the result follows since every frame is similar to a Parseval frame.

Now if in addition $\{Bf_n\}$ was an orthonormal basis, we have
$$\left\|\sum_{n}a_n (Bf_n-Bg_n) \right\|^2 \leq \|B\|^2 \left\|\sum_{n}a_n (f_n-g_n) \right\|^2 \leq \|B\|^2 \mathcal {B} \sum_{n=0}^{\infty}|a_n|^2=$$
$$\|B\|^2 \mathcal {B} \left\|\sum_{n=0}^{\infty}a_n Bf_n \right\|^2$$ for all finite sequences $\{a_n\}_{n=0}^{\infty}.$
It follows from the classical Payley-Wiener result in \cite{Young1980Nonharmonic} that $\{Bg_n\}$ is a Riesz basis, which is similar to an orthonormal basis.
\end{proof}

\section{Unconditional Schauder sequence classification}\label{5}
In this section, we characterize when an unconditional Schauder basis is a CFC sequence in terms of the behavior of its norms.
\begin{thm}
 Let $\{f_n\}_{n=0}^{\infty}\subseteq \Hcal$ be an unconditional Schauder basis. 
 
 \begin{enumerate}
     \item $\{f_n\}_{n=0}^{\infty}$ is not a CFC sequence if and only if $$\lim_{n\to \infty}\|f_n\|=0.$$
     \item The following are equivalent:
     \begin{enumerate}
      \item $\{f_n\}_{n=0}^{\infty}$ is a CO sequence.
     \item $\{f_n\}_{n=0}^{\infty}$ is an ICO sequence.
    \item $\inf \|f_n\|>0$.
     \end{enumerate} 
     \item The following are equivalent:
     \begin{enumerate}
         \item $\{f_n\}_{n=0}^{\infty}$ is a SCO sequence.
         \item $\{f_n\}_{n=0}^{\infty}$ is a Riesz basis.
     \end{enumerate}
     
 \end{enumerate}
\end{thm}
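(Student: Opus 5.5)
Throughout, the plan is to reduce everything to a diagonal model. Write $c_n=\|f_n\|$; these are nonzero, since a Schauder basis has no zero vectors. The normalized sequence $\{f_n/c_n\}$ is again an unconditional Schauder basis, with the same biorthogonal system rescaled, and it is norm bounded above and below. So by Gohberg's theorem it is a Riesz basis. Hence there are an invertible $R\in B(\Hcal)$ and an orthonormal basis $\{e_n\}$ with $f_n=c_nRe_n$. Then $\langle f,f_n\rangle=c_n\langle R^{*}f,e_n\rangle$, so up to the isomorphism $R^{*}$, the operator $A_{f_n}$ is the maximal multiplication operator $M_c:a\mapsto \{c_na_n\}$ on $\ell^2(\mathbb{N})$. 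In particular
$$\operatorname{Im}(A_{f_n})=\{b\in\ell^2(\mathbb{N}): \{b_n/c_n\}\in\ell^2(\mathbb{N})\}.$$
All three parts are then read off from Theorem \ref{mainclassthm} and Corollary \ref{COcor} applied to this model.

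For (1) I use criterion (2) of Theorem \ref{mainclassthm}. Suppose $c_n\not\to 0$, so $c_{n_k}\ge\varepsilon>0$ along a subsequence. Then $\overline{\operatorname{span}}\{e_{n_k}\}$ is a closed infinite-dimensional subspace of the image, so $\{f_n\}$ is CFC.

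Conversely, suppose $c_n\to 0$ and a closed infinite-dimensional $C\subseteq\operatorname{Im}(A_{f_n})$ exists. Then $b\mapsto b/c$ is an everywhere-defined closed operator on $C$, hence bounded by the closed graph theorem: $\|b/c\|\le K\|b\|$. For each $N$, the subspace of $C$ on which the first $N$ coordinates vanish has finite codimension, so it contains a nonzero $b$. For that $b$ we get $\|b/c\|\ge (\sup_{n>N}c_n)^{-1}\|b\|$. Letting $N\to\infty$ contradicts the bound. I expect this closed-graph step to be the only delicate point in (1).

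For (2) I use Corollary \ref{COcor}. CO is equivalent to surjectivity of $A_{f_n}$, that is, to $\{b_n/c_n\}\in\ell^2$ for every $b\in\ell^2$. This holds if and only if $\sup 1/c_n<\infty$: if $\inf c_n=0$, choose $c_{n_k}\le 2^{-k}$ and $b_{n_k}=2^{-k/2}$ to get divergence. ICO trivially implies CO. For $\inf c_n>0\Rightarrow$ ICO, I take $D=\operatorname{dom}(A_{f_n})$ and check the following.
\begin{enumerate}
\item $D$ contains $(R^{*})^{-1}$ of all finitely supported vectors, so it is dense.
\item The lower bound $\|A_{f_n}f\|\ge(\inf c_n)\|R^{*}f\|\ge(\inf c_n)\|R^{-1}\|^{-1}\|f\|$ holds on $D$.
\item The dual vectors $g_k=(R^{*})^{-1}e_k/\overline{c_k}$ lie in $D$ and satisfy $\langle f_n,g_k\rangle=\delta_{nk}$.
\end{enumerate}
Then part (2) of the "furthermore" clause of Corollary \ref{COcor} gives ICO.

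For (3), the direction Riesz basis $\Rightarrow$ SCO is immediate: the bounded invertible map $f_n\mapsto e_n$ works. For the converse, SCO implies CO, so $\inf c_n>0$ by (2), and by Gohberg's theorem it remains to show $\sup c_n<\infty$. This is the main obstacle. By Corollary \ref{COcor}, choose $D$ closed with $A_{f_n}|_D$ bounded below and onto $\ell^2$. Since $D\subseteq\operatorname{dom}(A_{f_n})$ is closed and $A_{f_n}$ is closed, $A_{f_n}|_D$ is also bounded by the closed graph theorem, so it is an isomorphism of $D$ onto $\ell^2$.

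Transport this via $R^{*}$ to $E=R^{*}D$. There $\|c\cdot a\|\asymp\|a\|$ for $a\in E$, and $M_c(E)=\ell^2$. Since $e_n\in\ell^2$ is the image of $a^{(n)}=e_n/c_n$, and $a^{(n)}$ is the unique preimage by injectivity, we must have $a^{(n)}\in E$. Then $1=\|e_n\|\asymp\|e_n/c_n\|=1/c_n$, so $c_n$ is bounded above. Gohberg's theorem now gives that $\{f_n\}$ is a Riesz basis.
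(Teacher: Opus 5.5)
Your proposal is correct, but it takes a more computational route than the paper. You use Gohberg's theorem once to write $f_n=c_nRe_n$, identify $A_{f_n}$ with the maximal diagonal operator $M_c$, and read everything off the explicit image $\{b\in\ell^2(\mathbb{N}): b/c\in\ell^2(\mathbb{N})\}$.

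In (1) this is the paper's tail argument in dual form. The paper uses criterion (3) of Theorem \ref{mainclassthm}, where the lower bound is given outright, so it needs no closed graph step. You use criterion (2) and recover the bound by the closed graph theorem.

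In (2) and (3) the routes genuinely differ. The paper observes that if $\{Bf_n\}$ is an orthonormal basis, then $\{B^{*}Bf_n\}$ is the biorthogonal system of $\{f_n\}$. That system is complete because $\{f_n\}$ is a Schauder basis, so $\overline{\operatorname{Im}B^{*}}=\Hcal$ and every CO map is automatically injective. This gives CO $\Rightarrow$ ICO at once. In (3) it makes a surjective $B$ invertible, so $\{f_n\}$ is similar to an orthonormal basis with no norm estimate and no second appeal to Gohberg; neither implication uses unconditionality. You instead characterize CO as surjectivity of $M_c$, i.e. $\inf c_n>0$. In (3) you bound $c_n$ above by locating $e_n/c_n$ in $R^{*}D$ and using boundedness of $A_{f_n}|_D$. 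This is the same phenomenon in coordinates: the closed subspace $D$ contains the complete biorthogonal system, hence $D=\Hcal$ and $\{f_n\}$ is Bessel. Your version buys an explicit picture of the analysis operator; the paper's is shorter and its argument for these implications would work for any Schauder basis.

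Two small points. First, the detour through Corollary \ref{COcor} for $\inf c_n>0\Rightarrow$ ICO is unnecessary. In your model, the diagonal operator $e_n\mapsto e_n/c_n$ composed with $R^{-1}$ is already bounded, injective, and sends $f_n$ to $e_n$, which is exactly the paper's argument. Second, in (3) the assertion that $A_{f_n}|_D$ maps onto $\ell^2(\mathbb{N})$ deserves a line. The biorthogonal vectors $g_k\in D$ supplied by Corollary \ref{COcor}(3) satisfy $A_{f_n}g_k=e_k$. The range is closed because $A_{f_n}|_D$ is bounded and bounded below on the closed subspace $D$.
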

\begin{proof}
Note that $\left\{\frac{f_n}{\|f_n\|} \right\}$ is a Bessel sequence with some bound $\mathcal{B}>0$ by the theorem of Gohberg.

(1):

Suppose $$\lim_{n\to \infty}\|f_n\|=0.$$ Assume for the sake of contradiction using Theorem \ref{mainclassthm}, that there is an infinite-dimensional subspace $D\subseteq \Hcal$ and $A>0$ such that
$$A\|f\|^{2}\leq \sum_{n=0}^{\infty}|\langle f, f_n\rangle|^{2}$$ for all $f\in D.$ Choose $N$ so that $$\|f_{n}\|^{2}<\frac{A}{2\mathcal{B}}$$ for all $n\geq N$.
Now note that there is a non-zero $g\in D\cap span\{f_{0},\dots, f_{N}\}^{\perp}$ by $P_{span\{f_{0},\dots ,f_{N}\}}|_{D}$ having infinite-dimensional kernel.
Now observe that
$$\sum_{n=0}^{\infty}|\langle g,f_n\rangle|^{2}=\sum_{n=N+1}^{\infty}\left|\left\langle g,\frac{f_n}{\|f_n\|} \right\rangle \right|^{2}\|f_n\|^{2}\leq \frac{A}{2\mathcal{B}} \sum_{n=0}^{\infty}\left|\left\langle g,\frac{f_n}{\|f_n\|} \right\rangle \right|^{2}\leq \frac{A}{2} \|g\|^{2},$$ which is a contradiction.

Now suppose that for infinitely many $n$, $$\|f_n\|\geq \epsilon$$ for some $\epsilon>0$. Define $B\in B(\Hcal)$ by $Bf_n=\frac{f_n}{\|f_n\|}$ if n is such that $\|f_n\|\geq \epsilon$ and $Bf_n=0$ for other $n$. It is easy to check that $B$ is bounded since $\left\{\frac{f_n}{\|f_n\|} \right\}$ is similar to an orthonormal basis. Furthermore, $\{Bf_n\}$ is an infinite Riesz sequence with some possible zero vectors, which clearly satisfies the hypothesis of Theorem \ref{mainclassthm}.

(2):

$(a)\implies (b):$ Because $$\langle B^{*}Bf_n,f_k\rangle=\delta_{nk}$$ and since $\{f_n\}$ is complete Schauder basis, it follows that for all $f\in  \Hcal$,
$$f=\sum_{n=0}^{\infty}\langle f, B^{*}Bf_n\rangle f_n.$$ In particular, $\overline{Im(B^{*})}=\Hcal$.

$(b)\implies (c):$ It is clear that a Riesz-Fischer sequence is norm bounded below.

$(c)\implies (a)$.
The map $B\in B(\Hcal)$ defined by $Bf_n=\frac{f_n}{\|f_n\|}$ for all $n$ is bounded since $\left\{\frac{f_n}{\|f_n\|} \right\}$ is similar to an orthonormal basis. 

(3):

$(a)\implies (b):$ By the previous $(a)\implies (b)$ argument, $B$ must be injective as well. Therefore, $\{f_n\}$ is similar to an orthonormal basis.

$(b)\implies (a):$ This is clear. 

\end{proof}

\begin{rem}
The previous result can also be restated easily if  $\{f_n\}_{n=0}^{\infty}$ is an unconditional Schauder basis in $\overline{\operatorname{span}\{f_n\}}$.
\end{rem}

The following result shows that weighted frames need not be CFC sequences. It is also clear that a CFC sequence need not be a weighted frame. 
\begin{cor}
Let $\{e_n\}_{n=0}^{\infty}\subseteq \ell^{2}(\mathbb{N})$ be the standard basis.
\begin{enumerate}
    \item $\left\{(n+1)e_{n} \right\}_{n=0}^{\infty}$ is an ICO sequence and not a SCO sequence.
    \item  $\left\{(n+1)^{-1}e_{n} \right \}_{n=0}^{\infty}$ is not a CFC sequence.
    \item $\left\{(n+1)e_{n} \right\}_{n=0}^{\infty}\cup \left\{(n+1)^{-1}e_{n} \right \}_{n=0}^{\infty}$ is a CFC sequence and not a CO sequence.
\end{enumerate}
\end{cor}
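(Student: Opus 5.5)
Parts (1) and (2) follow from the preceding theorem on unconditional Schauder bases; part (3) needs Corollary \ref{COcor} and a direct check of the Riesz--Fischer condition.

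For (1), the sequence $\{(n+1)e_n\}$ is an unconditional Schauder basis of $\ell^2(\mathbb{N})$, with biorthogonal system $\{(n+1)^{-1}e_n\}$ and unconditional expansions. Since $\inf_n\|(n+1)e_n\|=1>0$, part (2) of the preceding theorem shows it is an ICO sequence. It is not norm bounded above, so by Gohberg's theorem it is not a Riesz basis. Part (3) of the preceding theorem then shows it is not an SCO sequence.

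For (2), $\{(n+1)^{-1}e_n\}$ is again an unconditional Schauder basis, with biorthogonal system $\{(n+1)e_n\}$. Its norms $(n+1)^{-1}$ tend to $0$, so part (1) of the preceding theorem shows it is not a CFC sequence.

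For (3), the union contains $\{(n+1)e_n\}$, which by (1) admits $B\in B(\ell^2(\mathbb{N}))$ with $\{B(n+1)e_n\}$ a Parseval frame. I would work with condition (3) of Theorem \ref{mainclassthm} rather than build a new operator. Take $D=c_{00}$, which is infinite dimensional. For $f\in D$,
$$\|f\|^2\le \sum_n (n+1)^2|\langle f,e_n\rangle|^2 \le \sum_n \left[(n+1)^2+(n+1)^{-2}\right]|\langle f,e_n\rangle|^2<\infty,$$
so the union's analysis sum is bounded below by $\|f\|^2$ and is finite on $D$. Hence the union is a CFC sequence.

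To show the union is not CO, I use Corollary \ref{COcor}: CO is equivalent to Riesz--Fischer. The union contains the two vectors $e_0$ (once as $(0+1)e_0$, once as $(0+1)^{-1}e_0$). Take the coefficients $a=1$ on the first copy and $a=-1$ on the second. The combination is $0$, while $\sum|a_n|^2=2$. So no constant $A>0$ satisfies the Riesz--Fischer inequality, and the union is not CO.

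The main obstacle is to make sure "union" means the concatenated sequence indexed with repetitions, so that the duplicated vector $e_0$ really appears twice. Even without relying on the duplicate, the Riesz--Fischer inequality fails along $n\to\infty$. Take the combination $(n+1)^{-1}\cdot\bigl((n+1)e_n\bigr) - (n+1)\cdot\bigl((n+1)^{-1}e_n\bigr)=0$, with coefficient norm squared $(n+1)^{-2}+(n+1)^2$. This gives the same contradiction, and it also works if the sequence is reindexed so that the two copies of $e_0$ are treated as a single term.
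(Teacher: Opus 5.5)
Your proof is correct and matches what the paper intends: the paper states this corollary without proof, as a consequence of the preceding theorem on unconditional Schauder bases, which gives (1) and (2). Part (3) then follows, as you argue, from condition (3) of Theorem \ref{mainclassthm} (your dense choice $D=c_{00}$ even makes the union ICFC) together with the CO $\Leftrightarrow$ Riesz--Fischer equivalence of Corollary \ref{COcor}. One small point is that no limit $n\to\infty$ is needed in your last paragraph, since for each fixed $n$ the combination vanishes identically with nonzero coefficients.
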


\section{Bounded Bessel normalizable sequence classification}\label{6}
Similarly, in this section, we characterize when a bounded Bessel normalizable sequence is a CFC sequence in terms of the behavior of its norms. Bessel normalizable sequences have a rich analysis the reader can see here, \cite{Yu2024Frame-normalizable}.

\begin{thm}
 Let $\{f_n\}_{n=0}^{\infty}\subseteq \Hcal$ be a Bessel sequence consisting of no zero vectors such that $\left\{\frac{f_n}{\|f_n\| } \right\}$ is also a Bessel sequence with bound $\mathcal{B}$. Then $\{f_n\}_{n=0}^{\infty}$ is not a CFC sequence if and only if $$\lim_{n\to \infty}\|f_n\|=0.$$
\end{thm}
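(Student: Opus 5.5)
The plan is to prove the two directions separately, using condition $(3)$ of Theorem \ref{mainclassthm} as the working definition of a CFC sequence. The forward implication (norms tending to zero rules out CFC) will copy the argument of part (1) of the unconditional Schauder basis theorem of Section \ref{5}. The reverse implication will extract from the large-norm terms an infinite Riesz sequence, whose closed span serves as the subspace $D$.

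\emph{Step 1: if $\lim_{n}\|f_n\|=0$, then $\{f_n\}$ is not a CFC sequence.} Suppose, for contradiction, that $D$ is infinite-dimensional and $A\|f\|^2\leq \sum_n|\langle f,f_n\rangle|^2$ for all $f\in D$. Choose $N$ with $\|f_n\|^2<\frac{A}{2\mathcal{B}}$ for all $n>N$. The map $P_{\operatorname{span}\{f_0,\dots,f_N\}}|_D$ has finite rank, so it has nontrivial kernel, and we can pick a nonzero $g\in D\cap \operatorname{span}\{f_0,\dots,f_N\}^{\perp}$. Then
$$\sum_{n}|\langle g,f_n\rangle|^2=\sum_{n>N}\Bigl|\Bigl\langle g,\tfrac{f_n}{\|f_n\|}\Bigr\rangle\Bigr|^2\|f_n\|^2\leq \frac{A}{2\mathcal{B}}\,\mathcal{B}\|g\|^2=\frac{A}{2}\|g\|^2 .$$
This contradicts the lower bound on $D$. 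This step uses only the Bessel bound $\mathcal{B}$ of the normalized sequence.

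\emph{Step 2: if $\|f_n\|\not\to 0$, then $\{f_n\}$ is a CFC sequence.} There are $\epsilon>0$ and a subsequence with $\|f_{n_k}\|\geq\epsilon$. Set $h_k=f_{n_k}/\|f_{n_k}\|$. These are unit vectors forming a Bessel sequence, because the normalized sequence is Bessel. The key claim is that $\{h_k\}$ has an infinite subsequence that is a Riesz sequence.

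One route is to note that $\{h_k\}$ is a norm-bounded-below frame for its closed span and apply the theorem of Marcus, Spielman and Srivastava. That theorem partitions it into finitely many Riesz sequences, and at least one of them must be infinite.

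A more elementary route, which I would try first, uses weak nullity. The Bessel property forces $\langle f,h_k\rangle\to 0$ for every $f$, so $h_k\to 0$ weakly. We then choose indices $k_1<k_2<\cdots$ inductively so that $|\langle h_{k_j},h_{k_i}\rangle|\leq 2^{-i-j-2}$ for all $i<j$. The Gram matrix of $\{h_{k_j}\}$ is then $I+E$ with $\|E\|_{HS}\leq \tfrac12$. Hence it is bounded and invertible, and $\{h_{k_j}\}$ is a Riesz sequence with some lower Riesz bound $A'>0$.

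\emph{Step 3: verify condition $(3)$ of Theorem \ref{mainclassthm}.} Take $D=\overline{\operatorname{span}}\{h_{k_j}\}$, which is infinite-dimensional and closed. For $f\in D$, the Riesz sequence $\{h_{k_j}\}$ is a frame for $D$ with lower bound $A'$, so
$$\epsilon^2A'\|f\|^2\leq \sum_j \|f_{n_{k_j}}\|^2|\langle f,h_{k_j}\rangle|^2\leq \sum_n|\langle f,f_n\rangle|^2<\infty .$$
The final finiteness holds because $\{f_n\}$ is itself Bessel. Theorem \ref{mainclassthm} then shows that $\{f_n\}$ is a CFC sequence. Since $D$ is closed, it is in fact a SCFC sequence.

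I expect Step 2, extracting the infinite Riesz subsequence, to be the main obstacle. The remaining bookkeeping is to check that the Gram-matrix perturbation bound really yields a lower Riesz bound, which follows because $\|E\|<1$.
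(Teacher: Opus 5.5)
Your proof is correct. Step 1 is essentially the paper's argument verbatim. The real difference is in how you obtain the infinite Riesz subsequence in Step 2.

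The paper's route is as follows.
\begin{itemize}
\item It invokes the Marcus--Spielman--Srivastava theorem to split $\{f_n/\|f_n\|\}$ into finitely many Riesz sequences.
\item It uses pigeonhole to find one piece containing infinitely many terms with $\|f_{n_k}\|\geq\epsilon$.
\item It constructs an explicit bounded operator $B$, with $Bf_{n_k}=f_{n_k}/\|f_{n_k}\|$ and $B=0$ on $\{f_{n_k}\}^{\perp}$, and applies Theorem \ref{mainclassthm} to $\{Bf_n\}$.
\end{itemize}

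Your route uses only that $h_k\to 0$ weakly, followed by a standard almost-orthogonal (Bessaga--Pe{\l}czy\'nski type) selection. You then verify condition (3) of Theorem \ref{mainclassthm} directly on $D=\overline{\operatorname{span}}\{h_{k_j}\}$, which spares you checking that $B$ is bounded.

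What each approach buys:
\begin{itemize}
\item MSS delivers much more than is needed here: a finite Riesz partition of the whole normalized sequence. The paper uses that to tie the theorem to finite unions of bounded unconditional Schauder sequences.
\item Your argument is elementary and independent of the solution of the Kadison--Singer problem.
\item Your argument also makes clear which hypothesis does what. Step 1 uses only that the normalized sequence is Bessel. In Step 2, weak nullity of $h_k$ already follows from $\{f_n\}$ being Bessel together with $\|f_{n_k}\|\geq\epsilon$. So your argument actually shows that every Bessel sequence with $\|f_n\|\not\to 0$ is an SCFC sequence.
\end{itemize}

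One correction to the aside in Step 2: a unit-norm Bessel sequence need not be a frame for its closed span. For a counterexample, normalize $\{e_{2n}\}_{n\geq 1}\cup\{e_{2n}+\tfrac{1}{n}e_{2n+1}\}_{n\geq 1}$ and test against $e_{2n+1}$; the frame sum is of order $n^{-2}$. To use MSS there, first adjoin an orthonormal basis to get a norm-bounded-below frame. Then partition it and restrict the pieces to $\{h_k\}$, using that subsequences of Riesz sequences are Riesz sequences. This does not affect your main route.
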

\begin{proof}
Let $$\lim_{n\to \infty}\|f_n\|=0.$$ Suppose for the sake of contradiction using Theorem \ref{mainclassthm} that there is an infinite-dimensional subspace $D\subseteq \Hcal$ and $A>0$ such that
$$A\|f\|^{2}\leq \sum_{n=0}^{\infty}|\langle f, f_n\rangle|^{2}$$ for all $f\in D.$ Choose $N$ so that $\|f_{n}\|^{2}<\frac{A}{2\mathcal{B}}$ for all $n\geq N$.
Now note that there is a non-zero $g\in D\cap span\{f_{0},\dots f_{N}\}^{\perp}$ by $P_{span\{f_{0},\dots ,f_{N}\}}|_{D}$ having infinite-dimensional kernel.
Now observe that
$$\sum_{n=0}^{\infty}|\langle g,f_n\rangle|^{2}=\sum_{n=N+1}^{\infty}\left|\left\langle g,\frac{f_n}{\|f_n\|} \right\rangle \right|^{2}\|f_n\|^{2}\leq \frac{A}{2\mathcal{B}} \sum_{n=0}^{\infty}\left|\left\langle g,\frac{f_n}{\|f_n\|} \right\rangle \right|^{2}\leq \frac{A}{2} \|g\|^{2},$$ which is a contradiction.

Now suppose that for infinitely many $n$, $$\|f_n\|\geq \epsilon$$ for some $\epsilon>0$.
By the Feichtinger conjecture and since any subsequence of a Riesz sequence is a Riesz sequence, $\left\{\frac{f_n}{\|f_n\|} \right\}$ consists of a finite union of Riesz sequences. Now it follows by pigeonhole principle that there is a subsequence $\left\{\frac{f_{n_{k}}}{\|f_{n_k}\| } \right\}_{k=0}^{\infty}$ that is a Riesz sequence such that $\inf\|f_{n_k}\|>0.$ Now define the map $B\in B(\Hcal)$ such that  $Bf_{n_k}= \frac{f_{n_{k}}}{\|f_{n_k}\| }$ for all $k$ and
$Bf=0$ for
$f\in \{f_{n_{k}}\}^{\perp}$. It follows that $B$ is bounded. Furthermore, $\{Bf_n\}$ consists of an infinite Riesz sequence and a Bessel sequence since it is clear that the bounded image of a Bessel sequence is a Bessel sequence. It follows from Theorem \ref{mainclassthm} that $\{Bf_n\}$ is a CFC sequence and therefore $\{f_n\}$ is a CFC sequence.
\end{proof}

\begin{rem}
This is the classification of sequences consisting of finite unions of bounded unconditional Schauder sequences by the Feichtinger conjecture.
\end{rem}

\begin{cor}
Any norm bounded below Bessel sequence is a SCFC sequence.
\end{cor}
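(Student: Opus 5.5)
We prove the corollary by combining the Marcus--Spielman--Srivastava theorem with the surjectivity criterion in Theorem \ref{mainclassthm}. Let $\{f_n\}$ be a Bessel sequence with $\inf_n\|f_n\|\geq \epsilon>0$ and Bessel bound $\mathcal{B}$. Since $\|f_n\|^2\leq \mathcal{B}$, the normalized sequence $\{f_n/\|f_n\|\}$ is also Bessel, with bound $\mathcal{B}/\epsilon^2$. So the hypotheses of the preceding theorem hold, and the norms of the $f_n$ do not tend to zero. Hence $\{f_n\}$ is CFC. What remains is to upgrade this to SCFC, i.e.\ to find a \emph{closed} infinite-dimensional $D$ as in Theorem \ref{mainclassthm}(3).

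We first extract a Riesz subsequence from $\{f_n\}$ itself. The closed span $K=\overline{\operatorname{span}\{f_n\}}$ is infinite dimensional. Indeed, if it were finite dimensional, then $\{f_n\}$ would be infinitely many vectors of norm at least $\epsilon$ in a finite-dimensional space. Choosing $f\in K$ with $\sum_n|\langle f,f_n\rangle|^2$ forced to diverge (e.g.\ averaging over an orthonormal basis of $K$ gives $\sum_j\sum_n|\langle e_j,f_n\rangle|^2=\sum_n\|f_n\|^2=\infty$) contradicts the Bessel property. Thus $\{f_n\}$ is a Bessel sequence bounded below in norm, and it is a frame for $K$ modulo the lower bound. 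However, Marcus--Spielman--Srivastava as stated needs a frame. To get around this, we add an orthonormal basis $\{e_j\}$ of $\Hcal$. The union $\{f_n\}\cup\{e_j\}$ is then a frame, norm bounded below, so it partitions into finitely many Riesz sequences. By pigeonhole, one part contains infinitely many $f_n$, and since subsequences of Riesz sequences are Riesz, we obtain a Riesz sequence $\{f_{n_k}\}_{k}$.

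Now set $D=\overline{\operatorname{span}\{f_{n_k}\}}$, which is closed and infinite dimensional. The sequence $\{f_{n_k}\}$ is a Riesz basis of $D$, so there is $A>0$ with
$$A\|f\|^2\leq \sum_k|\langle f,f_{n_k}\rangle|^2\leq \sum_n|\langle f,f_n\rangle|^2\leq \mathcal{B}\|f\|^2$$
for all $f\in D$. By the ``Furthermore (1)'' clause of Theorem \ref{mainclassthm}, $\{f_n\}$ is an SCFC sequence. Alternatively, one can use the remark that SCFC is equivalent to $\{P_Df_n\}$ being a frame in $D$.

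The main obstacle is the invocation of the Feichtinger theorem. Its statement requires a frame, so we must pass to the augmented frame above; we should also check that the pigeonhole step yields infinitely many original vectors rather than added $e_j$'s. This is immediate, since there are infinitely many $f_n$ and only finitely many parts.
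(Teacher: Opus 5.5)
Your proof is correct and takes essentially the paper's route: the corollary rests on the Section 6 argument, which uses Marcus--Spielman--Srivastava plus pigeonhole to extract a Riesz subsequence $\{f_{n_k}\}$. The paper then upgrades CFC to SCFC through the Bessel-sequence corollary of Section 3 (the lower bound passes to $\overline{D}$), whereas you take $D=\overline{\operatorname{span}\{f_{n_k}\}}$ directly, and your augmentation by an orthonormal basis properly supplies the frame hypothesis that the paper glosses over when it applies the theorem to the Bessel sequence $\{f_n/\|f_n\|\}$.
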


\begin{cor}
If $\mu$ is a Borel probability measure on $[0,1)$ such that $\{f_n\}\subseteq L^{2}(\mu)$ is a Bessel sequence consisting of exponential functions, then $\{f_n\}$ is a SCFC sequence.
\end{cor}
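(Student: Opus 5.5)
The plan is to reduce the statement to the preceding corollary, that any norm bounded below Bessel sequence is a SCFC sequence. The key observation is that exponentials have constant norm in $L^2(\mu)$.

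Write $f_n(x)=e^{2\pi i \lambda_n x}$ for some frequencies $\lambda_n$. Since $|f_n(x)|=1$ for every $x$ and $\mu$ is a probability measure,
$$\|f_n\|^2_{L^2(\mu)}=\int_{[0,1)}|e^{2\pi i\lambda_n x}|^2\,d\mu(x)=\mu([0,1))=1$$
for all $n$. Hence $\{f_n\}$ is norm bounded below, and in fact $\inf_n\|f_n\|=\sup_n\|f_n\|=1$. It is also Bessel by hypothesis, so the preceding corollary applies directly and gives that $\{f_n\}$ is a SCFC sequence.

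For completeness, the preceding corollary itself follows from the theorem of Section \ref{6} together with Theorem \ref{mainclassthm}. Since $f_n\neq 0$ and $\frac{f_n}{\|f_n\|}=f_n$, the normalized sequence is Bessel with the same bound. Since $\|f_n\|\not\to 0$, the Marcus--Spielman--Srivastava theorem, applied to the norm bounded below Bessel sequence $\{f_n\}$, combined with the pigeonhole principle, yields an infinite Riesz subsequence $\{f_{n_k}\}$.

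The step needing the most care is upgrading the conclusion from CFC to SCFC. For this, let $D=\overline{\operatorname{span}\{f_{n_k}\}}$, which is closed and infinite dimensional. For $f\in D$, the Riesz lower bound of $\{f_{n_k}\}$ in $D$ gives
$$A\|f\|^2\le\sum_k|\langle f,f_{n_k}\rangle|^2\le\sum_n|\langle f,f_n\rangle|^2,$$
and the Bessel property gives finiteness of the right-hand side. So condition $(3)$ of Theorem \ref{mainclassthm} holds with $D$ closed, and the first furthermore-clause of Theorem \ref{mainclassthm} then gives that $\{f_n\}$ is SCFC.

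Infinite dimensionality of $L^2(\mu)$ is implicitly needed, which is the paper's standing assumption on $\Hcal$. If $\mu$ had finite support, the sequence would live in a finite-dimensional space. I would note this caveat rather than treat it as an obstacle.
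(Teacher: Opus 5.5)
Your proposal is correct and is the paper's intended argument: exponentials have norm $1$ in $L^2(\mu)$ when $\mu$ is a probability measure, so the preceding corollary applies. Your sketch of that corollary (an infinite Riesz subsequence from Marcus--Spielman--Srivastava, then condition (3) of Theorem \ref{mainclassthm} on its closed span) is the argument behind the Section 6 theorem and the Bessel corollary. Your finite-dimensionality caveat is automatic: infinitely many unit vectors cannot form a Bessel sequence in a finite-dimensional space, because the trace of the frame operator is $\sum_n\|f_n\|^2$, so the hypotheses already force $L^2(\mu)$ to be infinite dimensional.
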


For more about Bessel sequences of exponetials the reader can see: \cite{Dutkay2011Bessel,He2013Exponential, Nir2018Fourier}.

\section{CFC sequences and the standard exponential system}\label{7}
Finally, in this section, we classify finite Borel measures on $[0,1)$ for which the standard exponential system is a CFC sequence and for which it is a Riesz–Fischer sequence. We further show that for singular measures, a certain frame-like exponential system is never a CFC sequence, even though it is a lower-semi frame.

The following theorem in \cite{Berner2024Frame-like} will help with the classification, and its proof was inspired from the work of Lai \cite{Lai2011Fourier}:
\begin{thm*}[Berner]
Let $\mu$ be a finite Borel measure on $[0,1)$, and let $g$ denote the Radon–Nikodym derivative of its absolutely continuous part with respect to Lebesgue measure. Suppose that there is a Bessel sequence $\{g_{n}\}$ with bound $B$ such that
$$\lim_{M\to \infty}\sum_{n=-M}^{M}\langle f, g_{n}\rangle e^{2\pi i nx}=f$$
for all $f\in L^{2}(\mu)$ with convergence in norm, then $$g(x)>\frac{1}{3B}$$ Lebesgue almost everywhere on its support.
\end{thm*}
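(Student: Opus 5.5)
The plan is a duality argument. I test the reconstruction formula against indicator functions of sets where $g$ is small, and compare the resulting coefficient series in two Hilbert spaces at once: $L^2(\mu)$ and $L^2([0,1),dx)$. Write $\mu=g\,dx+\mu_s$ with $\mu_s$ singular. Fix a Borel set $N\subseteq[0,1)$ with Lebesgue measure zero and $\mu_s([0,1)\setminus N)=0$. Suppose, for contradiction, that
$$E:=\{x\notin N: 0<g(x)\leq \tfrac{1}{3B}\}$$
has positive Lebesgue measure. (The argument in fact seems to give the stronger bound $g\geq 1/B$, which is consistent with the weaker constant claimed.)

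\emph{Step 1.} Let $f=\chi_E\in L^2(\mu)$ and $c_n=\langle f,g_n\rangle_{\mu}$. The Bessel bound gives
$$\sum_n|c_n|^2\leq B\|\chi_E\|_{L^2(\mu)}^2=B\int_E g\,dx\leq \tfrac{1}{3}|E|,$$
since $\mu_s(E)=0$. The hypothesis says that the symmetric partial sums $S_M=\sum_{|n|\leq M}c_n e^{2\pi i n x}$ converge to $\chi_E$ in $L^2(\mu)$. In particular they converge to $\chi_E$ in $L^2(g\,dx)$, because $\|h\|_{L^2(g\,dx)}\leq \|h\|_{L^2(\mu)}$.

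\emph{Step 2.} Since $\{c_n\}\in\ell^2(\mathbb{Z})$, the same partial sums $S_M$ converge in $L^2([0,1),dx)$ to some $F$. By Parseval, $\int_0^1|F|^2\,dx=\sum_n|c_n|^2$. Pass to a subsequence $M_j$ along which $S_{M_j}\to F$ Lebesgue-a.e. Pass to a further subsequence along which $S_{M_j}\to\chi_E$ $(g\,dx)$-a.e. On $\{g>0\}$ these two exceptional sets are both Lebesgue-null, so $F=\chi_E$ Lebesgue-a.e. on $\{g>0\}\supseteq E$.

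\emph{Step 3.} Combining the two steps,
$$|E|=\int_E|F|^2\,dx\leq\int_0^1|F|^2\,dx=\sum_n|c_n|^2\leq\tfrac13|E|.$$
This forces $|E|=0$, which is the desired contradiction. Running the same computation with the threshold $1/(3B)$ replaced by any $t<1/B$ shows that $g\geq 1/B$ a.e. on its support, which is even stronger than the claim.

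\emph{Main obstacle.} The delicate point is Step 2: identifying the $L^2(dx)$-limit $F$ with $\chi_E$. The series converges in two different norms, and the measure $g\,dx$ may vanish or degenerate. The subsequence/a.e.\ argument handles this only on $\{g>0\}$, so care is needed to choose $E$ inside the support of $g$ and off the carrier $N$ of $\mu_s$. If a cleaner route is wanted, I would instead pair $F$ against $\chi_E\,\overline{h}$ for bounded $h$ and use dominated convergence to identify $F$ on $E$ weakly.
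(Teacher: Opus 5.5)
Your proof is correct, and it actually proves more than is claimed: $g\ge 1/B$ Lebesgue-a.e.\ on $\{g>0\}$. This cannot be improved. For $\mu=c\,dx$ and $g_n=c^{-1}e^{2\pi i n x}$ the hypotheses hold with $B=1/c$, while $g\equiv c=1/B$.

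Each step checks out:
\begin{itemize}
\item $\mu_s(E)=0$ gives $\sum_n|c_n|^2\le B\int_E g\,dx$.
\item A $g\,dx$-null set meets $\{g>0\}$ in a Lebesgue-null set, so your double-subsequence argument does identify the $L^2(dx)$-limit $F$ with $\chi_E$ on $E$.
\item Parseval in $L^2([0,1),dx)$ then closes the argument.
\end{itemize}
Reading ``support'' as $\{g>0\}$ is also the right choice. For the closed support the claim fails: take $g=\chi_U$ with $U$ dense open and $|U|=\frac12$, and $g_n=\chi_U e^{2\pi i n x}$, so $B=1$.

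The paper itself gives no proof of this statement. It is imported from \cite{Berner2024Frame-like}, with only a remark that the proof was inspired by Lai, so there is no in-text argument to compare line by line.

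A Lai-type argument would run on the analysis side rather than the synthesis side.
\begin{itemize}
\item Pair the reconstruction formula with $h$ itself and apply Cauchy--Schwarz. This gives $\|h\|_{L^2(\mu)}^2\le B\sum_n|\langle h,e^{2\pi i n x}\rangle_\mu|^2$ for all $h\in L^2(\mu)$, so the exponentials form a lower semi-frame with bound $1/B$. This is the same observation the paper makes for the Herr--Weber expansion.
\item Test this on $h=\chi_E$ with $E=\{0<g<1/B\}\setminus N$. Using $\langle\chi_E,e^{2\pi i n x}\rangle_\mu=\widehat{g\chi_E}(n)$ (again because $\mu_s(E)=0$) gives $\int_E g\,dx\le B\int_E g^2\,dx$.
\item Equivalently $\int_E g(1-Bg)\,dx\le 0$, and the integrand is positive on $E$, so $|E|=0$.
\end{itemize}
This version needs no identification of a limit in a second norm, since Parseval is applied directly to the bounded function $g\chi_E$. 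It also shows that only the lower semi-frame inequality is used. So the conclusion holds whenever the integer exponentials form a lower semi-frame for $L^2(\mu)$ with bound $1/B$, which is weaker than having a convergent Bessel-dual reconstruction.

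Your synthesis-side proof is just as short and makes the mechanism explicit. The $\ell^2$ coefficients of $\chi_E$ must also synthesize $\chi_E$ in $L^2(dx)$ on $\{g>0\}$, so their squared norm is at least $|E|$. Both routes give the same sharp constant; the factor $3$ in the quoted statement is not needed in this setting.
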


\begin{thm}\label{mainFourierthm}
Let $\mu$ be a Borel probability measure on $[0,1)$ that is not singular. Then $\{e^{2\pi i nx}\}_{n\in \mathbb{Z}}\subseteq L^{2}(\mu)$ is a SCFC sequence. 

Conversely, if $\mu$ is a singular Borel probability measure on $[0,1)$ and $B$ is a linear map defined on $span\{e^{2\pi i nx}\}_{n\in \mathbb{Z}}$ such that $\{Be^{2\pi i nx}\}_{n\in \mathbb{Z}}$ is a Parseval frame, then $B$ is not closable.
\end{thm}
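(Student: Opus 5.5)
Both halves of Theorem~\ref{mainFourierthm} reduce to the Lebesgue decomposition $d\mu = g\,dx + d\mu_s$, where $\mu_s$ is concentrated on a Lebesgue-null Borel set $N$. The first half follows from Theorem~\ref{mainclassthm}. The second half is a closability argument based on the fact that $\mu$ and Lebesgue measure live on disjoint sets.

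\emph{Non-singular case.} Since $\mu$ is not singular, $g>0$ on a set of positive Lebesgue measure. Hence there are constants $0<c<C<\infty$ and a Borel set $F\subseteq [0,1)\setminus N$ of positive Lebesgue measure with $c\le g\le C$ on $F$. Let $D=\{f\in L^2(\mu): f=0 \ \mu\text{-a.e. off } F\}$. This is a closed subspace, and it is infinite dimensional because $F$ has positive Lebesgue measure. For $f\in D$ we have $\langle f, e^{2\pi i n x}\rangle_{\mu}=\int_F f g\, e^{-2\pi i n x}\,dx$, since $\mu_s(F)=0$. These are the Fourier coefficients of $fg\,1_F\in L^2(dx)$, so Parseval's identity gives
$$\sum_{n\in\mathbb{Z}}|\langle f,e^{2\pi i nx}\rangle_\mu|^2=\int_F |f|^2 g^2\,dx .$$
Because $\|f\|_{L^2(\mu)}^2=\int_F|f|^2 g\,dx$ and $c\le g\le C$ on $F$, this sum lies between $c\|f\|^2_{L^2(\mu)}$ and $C\|f\|^2_{L^2(\mu)}$. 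So condition (3) of Theorem~\ref{mainclassthm} holds with a closed $D$, and the SCFC conclusion follows from part (1) of the supplementary statements there.

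\emph{Singular case.} Suppose $\{Be^{2\pi i nx}\}$ is a Parseval frame for $\mathcal K = L^2(\mu)$ and $B$ is closable; we aim for a contradiction. Let $T:\ell^2(\mathbb{Z})\to\mathcal K$ be the synthesis operator of this frame. It is a bounded co-isometry, hence nonzero. By linearity, $B(\sum a_n e^{2\pi i nx})=T(a)$ for every finitely supported $a$.

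The key step is the following claim: for every continuous $h$ on the torus, there are trigonometric polynomials $p_k$ such that $p_k\to 0$ in $L^2(\mu)$ while their coefficient sequences converge in $\ell^2$ to $\hat h$ up to an arbitrarily small error. Granting this, $Bp_k=T(\hat p_k)$ approaches $T(\hat h)$ while $p_k\to0$. A diagonal argument then yields polynomials with $p_k\to0$ and $Bp_k\to T(\hat h)$, so closability forces $T(\hat h)=0$. Since the sequences $\hat h$ with $h$ continuous are dense in $\ell^2(\mathbb{Z})$, this gives $T=0$, which is the desired contradiction.

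To prove the claim, fix $\varepsilon>0$.
\begin{enumerate}
\item By inner regularity, choose a compact $K\subseteq N$ with $\mu([0,1)\setminus K)$ as small as we like. Since $|K|=0$, choose an open $U\supseteq K$ of Lebesgue measure less than $\varepsilon$.
\item Take a continuous cutoff $\varphi$ with $0\le\varphi\le1$, $\varphi=1$ on $K$ and $\varphi=0$ off $U$, and set $h_1=h(1-\varphi)$. Then $h_1$ vanishes on $K$ and $\|h-h_1\|_{L^2(dx)}\le\|h\|_\infty\varepsilon^{1/2}$.
\item Let $p_k$ be the Fejér means of $h_1$. They converge to $h_1$ uniformly, so they are uniformly bounded by $\|h\|_\infty$ and tend to $0$ uniformly on $K$. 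Therefore $\|p_k\|^2_{L^2(\mu)}\le o(1)+\|h\|_\infty^2\,\mu([0,1)\setminus K)$, which is small.
\item Meanwhile $\hat p_k\to\hat h_1$ in $\ell^2$, and $\|\hat h_1-\hat h\|_{\ell^2}=\|h_1-h\|_{L^2(dx)}$ is small by step 2.
\end{enumerate}
Letting $\varepsilon\to0$ along a sequence and diagonalizing gives the claim. I expect the main obstacle to be bookkeeping this double limit cleanly, with $\mu([0,1)\setminus K)$ and $|U|$ both tending to zero. The essential input is only the mutual singularity of $\mu$ and Lebesgue measure.
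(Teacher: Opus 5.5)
Your proof is correct. The non-singular half is essentially the paper's argument: a set of positive Lebesgue measure off a null carrier of $\mu_s$ on which $\frac1m\le g\le m$, with Parseval's identity applied on $\chi_{S\cap A^c}L^2(\mu)$. The only difference is that you cite supplementary part (1) of Theorem~\ref{mainclassthm}, where the paper writes out the unitary $U:\overline D\to L^2(\mu)$.

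The singular half takes a genuinely different route. The paper argues through the adjoint. Since $B$ is closable and densely defined, $B^*$ is densely defined, and for $f\in\operatorname{dom}(B^*)$ one has $\sum_n|\langle B^*f,e^{2\pi inx}\rangle|^2=\|f\|^2<\infty$. So the complex measure $(B^*f)\mu$ has square-summable Fourier coefficients, and by uniqueness of Fourier--Stieltjes coefficients it is absolutely continuous. Singularity then forces $B^*f=0$, the Parseval identity forces $f=0$, and this contradicts density. In effect the paper uses $\operatorname{dom}(A_{e^{2\pi inx}})=\{0\}$.

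You instead use the sequential definition of closability and build the offending sequences by hand, via inner regularity, an Urysohn cutoff and Fej\'er means. Your claim amounts to density of the graph of the synthesis map $a\mapsto\sum_n a_ne^{2\pi inx}$ (on finitely supported sequences) in $\ell^2(\mathbb{Z})\oplus L^2(\mu)$. An orthogonal-complement computation shows this is equivalent to $\operatorname{dom}(A_{e^{2\pi inx}})=\{0\}$, so both proofs rest on the same fact seen from dual sides.

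Your version is elementary and self-contained: it avoids unbounded adjoints and the uniqueness theorem for measures. The paper's version is shorter and stays in its analysis-operator language. That is what lets the same reasoning pass later to the one-sided system $\{e^{2\pi inx}\}_{n\ge 0}$, with the F.\ and M.\ Riesz theorem replacing uniqueness. Your Fej\'er approximants, by contrast, use frequencies of both signs.
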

\begin{proof}
Suppose $\mu$ is a Borel probability measure on $[0,1)$ that is not singular. Then there is a Borel set of positive Lebesgue measure $S$ and $m\in \mathbb{N}$ where $$\frac{1}{m}\leq g(x)\leq m$$ for $x\in S$ where $g$ denotes the absolutely continuous part of $\mu$. Now let $A$ be a Borel set of positive Lebesgue measure zero such that 
$$\mu_{s}(A^{c})=\int_{A}gdx=0$$ where $\mu_{s}$ denotes the singular part of $\mu$. Furthermore, consider infinite-dimensional subspace
$$D:=span\left\{\chi_{S\cap A^{c}}f: f\in L^{2}(\mu)\right\}\subseteq L^{2}(\mu).$$ Observe for any $f\in D$,
$$\frac{1}{m}\left\|f\right\|^{2}=\frac{1}{m}\int_{S\cap A^{c}}|f|^{2}gdx \leq \int_{S\cap A^{c}}|f|^{2}g^{2}dx=\sum_{n\in \mathbb{Z}} \left|\left\langle f, e^{2\pi i nx}\right \rangle \right|^{2}\leq m \|f\|^{2}.$$ Let $U:\overline{D}\to L^{2}(\mu)$ be a unitary operator. It follows that for all $f\in L^{2}(\mu)$,

$$\frac{1}{m}\|f\|^{2}=\frac{1}{m}\|U^{*}f\|^{2}\leq \sum_{n\in \mathbb{Z}}|\langle U^{*}f, e^{2\pi inx}\rangle|^{2}\leq m\|U^{*}f\|^{2}=m\|f\|^{2}.$$
Therefore, $\{UP_{\overline{D}}e^{2\pi i nx}\}_{n\in \mathbb{Z}}$ is a frame. It is also clear that $UP_{\overline{D}}\in B(L^{2}(\mu))$ is surjective.

Now suppose $\mu$ is singular and there is a linear and closable map $B$ defined on $span\{e^{2\pi i nx}\}_{n\in \mathbb{Z}}$ so that $\{Be^{2\pi i nx}\}_{n\in \mathbb{Z}}$ is a Parseval frame. It follows that $B^{*}$ is densely defined on some set $\mathcal{D}$. Then for all $f\in \mathcal{D}$,
$$\sum_{n\in \mathbb{Z}}|\langle B^{*}f, e^{2\pi i nx}\rangle |^{2}<\infty.$$ In particular, the complex measure $B^{*}f\mu$ is absolutely continuous, and since $\mu$ is singular, it must be that $B^{*}f=0$, which contradicts $\mathcal{D}$ being dense.
\end{proof}

\begin{cor}
Let $\mu$ be a Borel probability measure on $[0,1)$. There is an infinite dimensional closed subspace $C\subseteq L^2(\mu)$ such that $\{P_{C}e^{2\pi i nx}\}_{n\in \mathbb{Z}}$ is a frame in $C$ if and only if $\mu$ is not singular.
\end{cor}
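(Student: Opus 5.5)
The two directions split along the dichotomy of Theorem \ref{mainFourierthm}, together with the remark after Theorem \ref{mainclassthm}: by the uniform boundedness principle, $\{f_n\}$ is a SCFC sequence exactly when there is an infinite dimensional closed subspace $C$ such that $\{P_C f_n\}$ is a frame in $C$.

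\textbf{Non-singular case.} If $\mu$ is not singular, Theorem \ref{mainFourierthm} says $\{e^{2\pi i nx}\}$ is a SCFC sequence. The remark then supplies the required $C$ directly. Concretely, I would take $C=\overline{D}$ with $D$ built as in that proof. There $D$ consists of functions supported on $S\cap A^c$, and it is closed: it is the range of multiplication by $\chi_{S\cap A^c}$, which is an orthogonal projection. For $f\in D$ one has $\langle f, P_C e_n\rangle=\langle f,e_n\rangle$, so the two-sided inequality with constants $1/m$ and $m$ from that proof is exactly the frame inequality for $\{P_C e^{2\pi i nx}\}$ in $C$. Finally, $C$ is infinite dimensional because $S\cap A^c$ has positive Lebesgue measure, and $\mu$ restricted to it is equivalent to Lebesgue measure.

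\textbf{Singular case.} Suppose $\mu$ is singular and some such $C$ exists. Then $B:=UP_C$ is bounded, and surjective for a unitary $U:C\to L^2(\mu)$. Since $\{P_Ce_n\}$ is a frame in $C$, composing with $S_C^{-1/2}$ (where $S_C$ is its frame operator on $C$) produces a bounded $B$ with $\{Be_n\}$ a Parseval frame. A bounded operator is closable, which contradicts the second half of Theorem \ref{mainFourierthm}.

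The same contradiction can also be reached directly. For every $f\in C$, $\sum_n|\langle f,e_n\rangle|^2<\infty$, so the complex measure $\bar f\,d\mu$ has Fourier coefficients in $\ell^2$. It is therefore absolutely continuous with respect to Lebesgue measure. Since it is also singular, it vanishes, so $f=0$ $\mu$-a.e. This forces $C=\{0\}$, contradicting that $C$ is infinite dimensional.

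\textbf{Main obstacle.} The only delicate point is checking that the subspace produced in the non-singular case really is closed and infinite dimensional. The construction handles this because $D$ is the image of an orthogonal projection onto a set on which $\mu$ is comparable to Lebesgue measure. Everything else follows from results already stated.
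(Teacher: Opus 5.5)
Your proposal is correct and takes essentially the paper's route. The paper gives no separate proof; the corollary is read off from Theorem \ref{mainFourierthm} together with the remark identifying SCFC sequences with those for which $\{P_C f_n\}$ is a frame in some infinite-dimensional closed subspace $C$, taking $C=\overline{D}$ from that proof when $\mu$ is not singular and using non-closability when it is. Your direct alternative for the singular case is the theorem's own absolute-continuity argument applied to $f\in C$; the measure with coefficients $\langle f,e^{2\pi i nx}\rangle$ is $f\,d\mu$ rather than $\bar f\,d\mu$, which changes nothing.
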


For singular measures however, the following result was shown by Herr and Weber \cite{Herr2017Fourier}, utilizing the Kaczmarz algorithm and the results of Kwapien and Mycielski \cite{Kwapien2001Kaczmarz}:

\begin{thm*}[Herr and Weber]
Let $\mu$ be a singular Borel probability measure on $[0,1)$. Then there exists a Parseval frame $\{g_{n}\}_{n=0}^{\infty}\subseteq L^{2}(\mu)$ such that
$$f=\sum_{n=0}^{\infty}\langle f, g_{n}\rangle e^{2\pi i nx}$$ for all $f\in L^{2}(\mu)$.
\end{thm*}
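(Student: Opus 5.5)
The plan is to take $g_n$ to be the auxiliary sequence produced by the Kaczmarz algorithm applied to the unit vectors $e_n:=e^{2\pi i nx}\in L^{2}(\mu)$, $n\geq 0$. Parseval-ness of $\{g_n\}$ will then reduce to convergence of the algorithm. For $f\in L^{2}(\mu)$ the iterates are
$$x_0=\langle f,e_0\rangle e_0,\qquad x_n=x_{n-1}+\langle f-x_{n-1},e_n\rangle e_n .$$
By induction, $x_n=\sum_{k=0}^{n}\langle f,g_k\rangle e_k$, where the $g_k$ are given by the recursion
$$g_0=e_0,\qquad g_n=e_n-\sum_{k=0}^{n-1}\langle e_n,e_k\rangle g_k .$$
Indeed, this recursion is exactly what makes $\langle f-x_{n-1},e_n\rangle=\langle f,g_n\rangle$. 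The sequence $\{g_n\}$ depends only on $\mu$ and not on $f$, so it is the candidate dual sequence.

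Next I would establish the Parseval identity along the iterates. Since $\|e_n\|_{L^2(\mu)}=1$ because $\mu$ is a probability measure, each step is the orthogonal projection of the error $f-x_{n-1}$ onto $\{e_n\}^{\perp}$. Hence
$$\|f-x_n\|^{2}=\|f-x_{n-1}\|^{2}-|\langle f,g_n\rangle|^{2},$$
and telescoping gives
$$\|f\|^{2}-\|f-x_n\|^{2}=\sum_{k=0}^{n}|\langle f,g_k\rangle|^{2}.$$
Consequently $\{g_n\}$ is a Parseval frame if and only if $x_n\to f$ for every $f$. Moreover, in that case the expansion $f=\sum\langle f,g_n\rangle e_n$ holds automatically. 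So the whole statement reduces to showing that the sequence $\{e_n\}_{n\geq0}$ is \emph{effective} in the sense of Kwapien and Mycielski \cite{Kwapien2001Kaczmarz}; this equivalence between effectivity and the Parseval property is also the observation of Haller and Szwarc \cite{Haller2005Kaczmarz}.

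The sequence $\{e_n\}$ is stationary, since $\langle e_n,e_m\rangle=\hat\mu(n-m)$ depends only on $n-m$, and its spectral measure is $\mu$ itself. The key, and hardest, step is therefore the Kwapien–Mycielski theorem: a stationary sequence of unit vectors is effective if its spectral measure is Lebesgue measure or is singular. Here I would invoke that result directly rather than reprove it. If a proof were wanted, the approach is to identify $L^{2}(\mu)$ with the closed span of the stationary sequence and to write the error operators $f\mapsto f-x_n$ as multiplication by polynomials built from the Fourier coefficients $\hat\mu(k)$. Effectivity then becomes an $H^{2}$ / Szegő-type statement on the circle. Singularity of $\mu$ forces the Herglotz/Cauchy transform of $\mu$ to have boundary values whose associated inner factor makes these errors tend to $0$ weakly, and hence in norm via the monotone identity above.

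A supporting ingredient, also coming out of the singularity hypothesis, is that $\{e_n\}_{n\geq0}$ is complete in $L^{2}(\mu)$. Without completeness no reconstruction over only nonnegative frequencies could exist. I would get this from the F. and M. Riesz theorem: if $h\mu$ annihilates all $e_n$ with $n\geq0$, then the measure $h\mu$ is absolutely continuous, so $h=0$ $\mu$-a.e. because $\mu$ is singular. The step I expect to be the real obstacle is the norm convergence $x_n\to f$. Completeness alone does not give it, and it is precisely where the singular spectral measure enters through the Kwapien–Mycielski analysis.
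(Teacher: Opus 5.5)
Your proposal is correct and takes the route the paper attributes to Herr and Weber: the Kaczmarz auxiliary sequence $\{g_n\}$, the Haller--Szwarc telescoping identity reducing the Parseval property to effectivity, and the Kwapie\'n--Mycielski theorem for stationary sequences with singular spectral measure, whose linear-density hypothesis you supply via the F.\ and M.\ Riesz theorem. One small caution concerns your optional sketch of Kwapie\'n--Mycielski: weak convergence of $f-x_n$ to $0$ together with monotonicity of $\|f-x_n\|$ does not by itself give norm convergence, but this does not affect the argument, since you invoke the theorem rather than reprove it.
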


We show that $\{e^{2\pi i nx}\}_{n=0}^{\infty}\subseteq L^{2}(\mu)$ is never a CFC sequence for any singular measure $\mu$, even though it reproduces all vectors with a dual-like Parseval frame. It is also not hard to check by the previous theorem that $\{e^{2\pi i nx}\}_{n=0}^{\infty}\subseteq L^{2}(\mu)$ is a lower-semi frame whenever $\mu$ is singular.

\begin{thm}
If $\mu$ is a singular Borel probability measure on $[0,1)$, then $\{e^{2\pi i nx}\}_{n=0}^{\infty}\subseteq L^{2}(\mu)$ can't be extended to a CFC sequence.
\end{thm}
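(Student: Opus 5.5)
The plan is to use Corollary \ref{extcor}. It says that $\{e^{2\pi i nx}\}_{n=0}^{\infty}$ can be extended to a CFC sequence if and only if $\operatorname{dom}(A_{e^{2\pi i nx}})$ contains an infinite dimensional subspace. So it suffices to prove the much stronger statement that, when $\mu$ is singular,
$$\operatorname{dom}(A_{e^{2\pi i nx}})=\left\{f\in L^{2}(\mu): \sum_{n=0}^{\infty}\left|\int f(x)e^{-2\pi i nx}\,d\mu(x)\right|^{2}<\infty\right\}=\{0\}.$$
This mirrors the last step of Theorem \ref{mainFourierthm}. The difference is that only the nonnegative frequencies are now controlled, so the absolute continuity argument needs an extra ingredient.

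Fix $f$ in the domain and consider the finite complex Borel measure $\nu:=f\,d\mu$ on $[0,1)$, viewed as a measure on the torus. It is finite because $f\in L^{2}(\mu)\subseteq L^{1}(\mu)$. Its Fourier coefficients are $\hat\nu(n)=\langle f, e^{2\pi i nx}\rangle$, so by hypothesis $\{\hat\nu(n)\}_{n\geq 0}\in \ell^{2}$. By the Riesz--Fischer theorem for Lebesgue measure there is an $h\in L^{2}([0,1))$ with $\hat h(n)=\hat\nu(n)$ for $n\geq 0$ and $\hat h(n)=0$ for $n<0$. Then the measure $\sigma:=\nu-h\,dx$ satisfies $\hat\sigma(n)=0$ for all $n\geq 0$.

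The key step, and the main obstacle, is to conclude that $\sigma$ is absolutely continuous. I would apply the F.~and M.~Riesz theorem: a measure on the torus whose Fourier coefficients vanish for all negative indices is absolutely continuous with respect to Lebesgue measure. To match the sign convention, I would apply it to the reflected measure $\tilde\sigma(E)=\sigma(-E)$, for which $\hat{\tilde\sigma}(n)=\hat\sigma(-n)$ vanishes for $n\leq 0$. Alternatively one can apply it to $\bar\sigma$, using $\hat{\bar\sigma}(n)=\overline{\hat\sigma(-n)}$. Reflection preserves absolute continuity, so $\sigma$ is absolutely continuous. Hence $\nu=\sigma+h\,dx$ is absolutely continuous as well.

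Finally, $\nu=f\,d\mu\ll\mu$, and $\mu$ is singular with respect to Lebesgue measure. So $\nu$ is simultaneously absolutely continuous and singular, which forces $\nu=0$, that is, $f=0$ $\mu$-almost everywhere. Therefore the domain of the analysis operator is $\{0\}$. In particular it contains no infinite dimensional subspace, and Corollary \ref{extcor} shows that $\{e^{2\pi i nx}\}_{n=0}^{\infty}$ cannot be extended to a CFC sequence.
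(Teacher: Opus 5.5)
Your proposal is correct and follows the paper's proof essentially step for step. You show $\operatorname{dom}(A_{e^{2\pi inx}})=\{0\}$ by subtracting from $f\,d\mu$ an $L^2$ density whose Fourier coefficients match its nonnegative ones, apply the F.~and M.~Riesz theorem to the difference, use the singularity of $\mu$ to get $f=0$, and conclude with Corollary \ref{extcor}, and your one addition is to spell out the reflection/conjugation needed to match the sign convention of the F.~and M.~Riesz theorem, which the paper leaves implicit.
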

\begin{proof}
Suppose that there is $f\in L^{2}(\mu)$ such that 
$$\sum_{n=0}^{\infty}|\langle f, e^{2\pi i nx}\rangle|^2<\infty.$$

Now define a complex measure $v$ so that 
$\hat{v}(n)=\langle f, e^{2\pi i nx}\rangle$ for $n\geq 0$ and 
$\hat{v}(n)=0$ for $n<0$. It follows that $v$ that is absolutely continuous with Radon-Nikodym derivative in $L^{2}([0,1))$.

Furthermore, the complex measure $f\mu -v$ is absolutely continuous by F. and M. Riesz theorem. Then it must be that $f\mu$ is absolutely continuous and $f=0$ since $\mu$ is singular. Therefore the $\operatorname{dom}(A_{e^{2\pi inx}})$ is trivial and therefore, the result follows by Corollary \ref{extcor}.
\end{proof}

\begin{thm}
Let $\mu$ be a Borel probability measure on $[0,1)$ with absolutely continuous part denoted $g$. Then 
\begin{enumerate}
    \item $\{e^{2\pi i nx}\}_{n\in \mathbb{Z}}\subseteq L^2(\mu)$ is a Riesz Fischer sequence if and only if $g$ is bounded below.
    \item $\{e^{2\pi i nx}\}_{n\in \mathbb{Z}}\subseteq L^2(\mu)$ is a SCO sequence if and only if $g$ is bounded above and below.
   \item $\{e^{2\pi i nx}\}_{n\in \mathbb{Z}}\subseteq L^2(\mu)$ is an ICO sequence if and only if $\mu$ is absolutely continuous and is $g$ bounded below.
\end{enumerate}

\end{thm}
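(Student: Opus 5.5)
The plan is to reduce everything to one explicit description of $A_{e^{2\pi inx}}$ and then read off each item from Corollary \ref{COcor}. Write $\mu=g\,dx+\mu_s$, and fix a Borel set $A$ of Lebesgue measure zero carrying $\mu_s$. For $f\in L^2(\mu)$ we have $\langle f,e^{2\pi inx}\rangle=\widehat{f\mu}(n)$. The sequence $\{\widehat{f\mu}(n)\}$ lies in $\ell^2(\mathbb{Z})$ iff the complex measure $f\mu$ is absolutely continuous with $L^2(dx)$ density; this is the same reasoning as in the proof of Theorem \ref{mainFourierthm}. Hence
$$\operatorname{dom}(A_{e^{2\pi inx}})=\{f\in L^2(\mu): f=0 \ \mu_s\text{-a.e.},\ fg\in L^2(dx)\},$$
and on this domain, by Plancherel,
$$\sum_n|\langle f,e^{2\pi inx}\rangle|^2=\int|f|^2g^2\,dx, \qquad \|f\|^2=\int|f|^2g\,dx.$$
So $A_{e^{2\pi inx}}$ is unitarily the map $f\mapsto fg$ into $L^2([0,1))$, and every condition below becomes a condition on $g$. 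Throughout, ``$g$ bounded below'' means $\operatorname{ess\,inf}_{[0,1)}g>0$ with respect to Lebesgue measure, so in particular $g>0$ a.e.

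For (1), use the equivalence of Riesz–Fischer with surjectivity of $A_{e^{2\pi inx}}$ from Corollary \ref{COcor}.
\begin{itemize}
\item If $g\geq c>0$, then for any $h\in L^2(dx)$ put $f=\chi_{A^c}h/g$. Then $\int|f|^2g\,dx\leq c^{-1}\|h\|^2$, so $f\in L^2(\mu)$, and $fg=h$ a.e. Thus $A_{e^{2\pi inx}}$ is surjective.
\item Conversely, suppose $A_{e^{2\pi inx}}$ is surjective. Hitting $h$ supported where $g=0$ forces $g>0$ a.e.
\item For every $h\in L^2(dx)$, the unique preimage $f=h/g$ (off $A$) must satisfy $\int|h|^2/g\,dx<\infty$. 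By the Banach–Steinhaus / closed graph theorem, or simply by testing $h=\chi_E$ on the sets $E=\{g<\epsilon\}$, this forces $\operatorname{ess\,inf} g>0$.
\end{itemize}

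For (2), use item (3) of Corollary \ref{COcor} with $D$ closed.
\begin{itemize}
\item Suppose $c\leq g\leq C$. Take $D=\{f: f=0\ \mu_s\text{-a.e.}\}$. It is closed and equals $\operatorname{dom}(A_{e^{2\pi inx}})$, because $fg\in L^2$ is automatic when $g$ is bounded. On $D$ the two integrals are comparable. The functions $g_k=\chi_{A^c}e^{2\pi ikx}/g$ lie in $D$ and satisfy $\langle e^{2\pi inx},g_k\rangle=\int e^{2\pi i(n-k)x}dx=\delta_{nk}$. This gives SCO.
\item Conversely, SCO implies Riesz–Fischer, so $g\geq c$ by (1).
\item The hard direction is the upper bound, and I expect it to be the main obstacle. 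Take $B$ surjective with $\{Be^{2\pi inx}\}$ an orthonormal basis, and let $D=\operatorname{Im}B^*$, which is closed. The Parseval identity together with the open mapping theorem shows that $A_{e^{2\pi inx}}|_D$ is a bounded bijection onto $\ell^2$; its image is all of $\ell^2$ because $\{Be^{2\pi inx}\}$ is an orthonormal basis.
\item Since $g>0$ a.e., $f\mapsto fg$ is injective on the whole domain. Surjectivity of the restriction to $D$ then forces $D=\operatorname{dom}(A_{e^{2\pi inx}})$.
\item So $\int|f|^2g^2\,dx\leq K\int|f|^2g\,dx$ for every $f$ in the domain. Testing on $f=\chi_{E}$ with $E=\{M<g<N\}\setminus A$ gives $M\leq K$ whenever $|E|>0$. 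Hence $g\leq K$ a.e.
\end{itemize}

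For (3), use Corollary \ref{COcor} with $\overline{D}=L^2(\mu)$.
\begin{itemize}
\item If $\mu_s=0$ and $g\geq c$, take $D=\operatorname{dom}(A_{e^{2\pi inx}})$. It is dense, since it contains $h\chi_{\{g\leq N\}}$ for bounded $h$, and these approximate $L^2(\mu)$ as $N\to\infty$. The lower bound $\int|f|^2g^2\geq c\int|f|^2g$ holds on $D$. The biorthogonal system $g_k=e^{2\pi ikx}/g$ lies in $D$, since $g_kg=e^{2\pi ikx}\in L^2$ and $\int 1/g\,dx<\infty$.
\item Conversely, ICO implies Riesz–Fischer, so $g\geq c$ by (1).
\item Any admissible $D$ is contained in the domain, which is contained in the closed subspace $\{f=0\ \mu_s\text{-a.e.}\}$. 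That subspace is proper unless $\mu_s=0$, so density of $D$ forces $\mu$ to be absolutely continuous.
\end{itemize}
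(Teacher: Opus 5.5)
Your proof is correct, and it takes a genuinely different route from the paper's. You compute $\operatorname{dom}(A_{e^{2\pi inx}})$ once and, via Plancherel, identify the analysis operator with $f\mapsto fg$ from that domain into $L^2([0,1))$. Each item then becomes a statement about a multiplication operator. Riesz--Fischer means this map is onto, which is equivalent to $1/g\in L^\infty$ by the closed graph theorem. Sufficiency is checked through condition (3) of Corollary \ref{COcor}, using an explicit $D$ and the biorthogonal family $\chi_{A^c}e^{2\pi ikx}/g$.

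The paper instead works with the operator $B$:
\begin{enumerate}
\item from biorthogonality it computes $B^*Be^{2\pi inx}=\chi_S e^{2\pi inx}/g$ and reads off that $g$ does not vanish;
\item it gets the lower bound from the lower semi-frame corollary of Casazza et al.;
\item it gets the upper bound in (2) by testing boundedness of the analysis operator on $\operatorname{Im}(B^*)$ against the vectors $\chi_S p/g$, with $p$ a trigonometric polynomial;
\item for sufficiency it exhibits explicit maps, namely $e^{2\pi inx}\mapsto\chi_S e^{2\pi inx}/\sqrt g$ and the Riesz sequence $\{\chi_S e^{2\pi inx}\}$.
\end{enumerate}

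Your replacement for the upper bound is neat: injectivity of the analysis operator forces $\operatorname{Im}(B^*)$ to be the entire domain, so the analysis operator is bounded there and you can test level-set indicators. Overall your route is more self-contained. It needs no external lower semi-frame theorem, and it makes explicit two points the paper only sketches:
\begin{enumerate}
\item the passage to $1/g\in L^\infty$, which the paper's displayed identity alone does not give (it only yields $1/g\in L^1$);
\item that when $B$ is surjective it is $B^*$, not $B^*B$, that is bounded below, and that this suffices.
\end{enumerate}
The paper's route, in exchange, produces concrete formulas for $B^*B$ and for orthonormalizing operators. The necessity of absolute continuity in (3) is the same density argument in both.

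One correction in (1): testing $h=\chi_E$ on $E=\{g<\epsilon\}$ is not an alternative to the closed graph theorem. By itself it only gives $\int_E g^{-1}\,dx<\infty$. That holds for every $E$ when, for example, $g(x)=\tfrac32\sqrt{x}$, even though $\operatorname{ess\,inf} g=0$. The indicator test works only after the closed graph theorem supplies the uniform bound $\int|h|^2g^{-1}\,dx\le K\|h\|^2$: the analysis operator is closed, so its everywhere-defined inverse is bounded. That bound then gives $|E|/\epsilon\le K|E|$, forcing $|E|=0$ for $\epsilon<1/K$.
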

\begin{proof}
First we assume that $\{e^{2\pi i nx}\}_{n\in \mathbb{Z}}\subseteq L^2(\mu)$ is a Riesz Fischer sequence. It follows from Corollary \ref{RFcor} that $\{e^{2\pi i nx}\}_{n\in \mathbb{Z}}$ is a lower-semi frame. Furthermore, by Corollary \ref{COcor}, there is a $B\in B(L^2(\mu))$ such that 

\begin{equation}\label{biortheq}
\langle B^{*}Be^{2\pi inx},e^{2\pi ikx}\rangle=\delta_{nk}.\end{equation}
It follows that for each $n$, the complex measure $B^{*}Be^{2\pi i nx}d\mu$ has the same Fourier coefficients as the complex measure $e^{2\pi i nx} dx$. In particular, 
$$B^{*}Be^{2\pi inx}g(x)=e^{2\pi inx}$$ for Lebesgue almost every $x$, and therefore,
$g$ is non vanishing. Consequently, $\frac{1}{g}\in L^{\infty}([0,1))$.

Now suppose that $\{e^{2\pi i nx}\}_{n\in \mathbb{Z}}\subseteq L^2(\mu)$ is a SCFC sequence in addition. Since $\{e^{2\pi i nx}\}_{n\in \mathbb{Z}}\subseteq L^2(\mu)$ is complete and $\frac{1}{g}\in L^{\infty}([0,1))$, it follows from equation \ref{biortheq},
\begin{equation}\label{B*B}
B^{*}Be^{2\pi inx}=\chi_{S}\dfrac{e^{2\pi i nx}}{g}   
\end{equation}
where $S$ is any set of Lebesgue measure one such that $$\mu_{s}(S)=0$$ where $\mu_{s}$ denotes the singular part of $\mu$. Furthermore, since $B^{*}B$ is bounded below from B being surjective and $\{B^{*}Be^{2\pi inx}\}$ is a Bessel sequence, it follows that for all $f$,
$$\sum_{n}|\langle B^{*}Bf, e^{2\pi i nx}\rangle|^2\leq \mathcal{B} \|B^{*}Bf\|^2$$ for some $\mathcal{B}>0$. 
In particular by equation \ref{B*B},
$$\int_{0}^{1}|h|^2dx\leq \mathcal{B}\int_{0}^{1}|h|^2\frac{1}{g}dx $$
for any trig polynomial $h$. It follows from this inequality that $\frac{1}{g}$ is bounded below as trig polynomials are dense in $L^{2}([0,1))$ and because $\frac{1}{g}$ is bounded.

Now suppose $\{e^{2\pi i nx}\}_{n\in \mathbb{Z}}\subseteq L^2(\mu)$ is an ICFC sequence. Note that $B^{*}f \mu$ is an absolutely continuous measure for any $f\in L^2(\mu)$ by the following inequality:
$$\sum_{n=0}^{\infty}|\langle B^{*}f, e^{2\pi i nx}\rangle |^{2}<\infty.$$
Therefore, $B^{*}f=0$ in $L^2(\mu_{s})$ where $\mu_{s}$ denotes the singular part of $\mu$. Now if we assume for the sake of contradiction that there is a Lebesgue measure zero set $A$ where $\mu_{s}(A)>0$, then there is a sequence $B^{*}g_k\to \chi_{A}$ since $B$ is injective. This contradicts $B^{*}g_k=0$ in $L^2(\mu_{s})$, and we conclude that $\mu$ is absolutely continuous.

Now suppose $$g(x)\geq A>0$$ for Lebesgue almost every $x$. Then it follows that the linear map defined by $e^{2\pi i nx}\to \chi_{S}\dfrac{e^{2\pi i nx}}{\sqrt{g}}$ for all $n\in \mathbb{Z}$ is bounded where $S\subseteq [0,1)$ is any set of Lebesgue measure one such that $\mu_{s}(S)=0$ where $\mu_{s}$ denotes the singular part of $\mu$. Furthermore, it is not hard to check that $\left\{\chi_{S}\dfrac{e^{2\pi i nx}}{\sqrt{g}} \right\}_{n\in \mathbb{Z}}\subseteq L^{2}(\mu)$ is an orthonormal sequence. Now it is clear that $\{e^{2\pi i nx}\}_{n\in \mathbb{Z}}\subseteq L^2(\mu)$ is a Riesz-Fischer sequence by Corollary \ref{COcor}.

Now if in addition, $$0<A\leq g(x)\leq B<\infty$$ for Lebesgue almost every $x$. It follows that $\{\chi_{S}e^{2\pi i nx}\}_{n\in \mathbb{Z}}\subseteq L^{2}(\mu)$ is a Riesz sequence where $S\subseteq [0,1)$ is any set of Lebesgue measure one such that $\mu_{s}(S)=0$ where $\mu_{s}$ denotes the singular part of $\mu$. Therefore, $\{e^{2\pi i nx}\}_{n\in \mathbb{Z}}\subseteq L^2(\mu)$ is a SCO sequence by Corollary \ref{COcor}.

Finally, suppose $\mu$ is absolutely continuous and $$g(x)\geq A>0$$ for Lebesgue almost every $x$. In this case, 
$$\left\langle \dfrac{e^{2\pi inx}}{g},e^{2\pi i kx} \right\rangle =\delta_{nk}.$$
Furthermore, $\{e^{2\pi i nx}\}_{n\in \mathbb{Z}}\subseteq L^2(\mu)$ is a lower-semi frame such that \newline $\operatorname{span}\left\{\dfrac{e^{2\pi i nx}}{g}\right\}\subseteq \operatorname{dom}(A_{e^{2\pi i nx}})$. Also, it is clear that $\operatorname{span}\left\{\dfrac{e^{2\pi i nx}}{g}\right\}$ is dense in $L^{2}(\mu)$, showing $\{e^{2\pi inx}\}$ is an ICO sequence by Theorem \ref{mainclassthm}.
\end{proof}

\printbibliography

@book {Young2001Introduction,
    AUTHOR = {Young, Robert M.},
     TITLE = {An introduction to nonharmonic {F}ourier series},
   EDITION = {first},
 PUBLISHER = {Academic Press, Inc., San Diego, CA},
      YEAR = {2001},
     PAGES = {xiv+234},
      ISBN = {0-12-772955-0},
   MRCLASS = {42-01 (30D20 42C15 42C40)},
  MRNUMBER = {1836633},
MRREVIEWER = {Ole\ Christensen},
}

@article {Reid1995Class,
    AUTHOR = {Reid, Russell M.},
     TITLE = {A class of {R}iesz-{F}ischer sequences},
   JOURNAL = {Proc. Amer. Math. Soc.},
  FJOURNAL = {Proceedings of the American Mathematical Society},
    VOLUME = {123},
      YEAR = {1995},
    NUMBER = {3},
     PAGES = {827--829},
      ISSN = {0002-9939,1088-6826},
   MRCLASS = {42A70 (30D99 42C15)},
  MRNUMBER = {1223519},
       DOI = {10.2307/2160807},
       URL = {https://doi.org/10.2307/2160807},
}

@article {Young1998Class,
    AUTHOR = {Young, Robert M.},
     TITLE = {On a class of {R}iesz-{F}ischer sequences},
   JOURNAL = {Proc. Amer. Math. Soc.},
  FJOURNAL = {Proceedings of the American Mathematical Society},
    VOLUME = {126},
      YEAR = {1998},
    NUMBER = {4},
     PAGES = {1139--1142},
      ISSN = {0002-9939,1088-6826},
   MRCLASS = {42A65 (30E05)},
  MRNUMBER = {1452835},
MRREVIEWER = {Kristian\ Seip},
       DOI = {10.1090/S0002-9939-98-04416-5},
       URL = {https://doi.org/10.1090/S0002-9939-98-04416-5},
}

@article {Casazza2002Riesz-Fischer,
    AUTHOR = {Casazza, P. and Christensen, O. and Li, S. and Lindner, A.},
     TITLE = {Riesz-{F}ischer sequences and lower frame bounds},
   JOURNAL = {Z. Anal. Anwendungen},
  FJOURNAL = {Zeitschrift f\"ur Analysis und ihre Anwendungen. Journal for
              Analysis and its Applications},
    VOLUME = {21},
      YEAR = {2002},
    NUMBER = {2},
     PAGES = {305--314},
      ISSN = {0232-2064,1661-4534},
   MRCLASS = {42C15},
  MRNUMBER = {1915263},
MRREVIEWER = {Mohamed\ Sifi},
       DOI = {10.4171/ZAA/1079},
       URL = {https://doi.org/10.4171/ZAA/1079},
}

@article {Antoine2021Lower,
    AUTHOR = {Antoine, J.-P. and Corso, R. and Trapani, C.},
     TITLE = {Lower semi-frames, frames, and metric operators},
   JOURNAL = {Mediterr. J. Math.},
  FJOURNAL = {Mediterranean Journal of Mathematics},
    VOLUME = {18},
      YEAR = {2021},
    NUMBER = {1},
     PAGES = {Paper No. 11, 20},
      ISSN = {1660-5446,1660-5454},
   MRCLASS = {42C15 (46B15 46G10)},
  MRNUMBER = {4193429},
       DOI = {10.1007/s00009-020-01652-x},
       URL = {https://doi.org/10.1007/s00009-020-01652-x},
}

@article {Yu2024Frame-normalizable,
    AUTHOR = {Yu, Pu-Ting},
     TITLE = {Frame-normalizable sequences},
   JOURNAL = {Adv. Comput. Math.},
  FJOURNAL = {Advances in Computational Mathematics},
    VOLUME = {50},
      YEAR = {2024},
    NUMBER = {4},
     PAGES = {Paper No. 89, 23},
      ISSN = {1019-7168,1572-9044},
   MRCLASS = {42C15},
  MRNUMBER = {4784098},
MRREVIEWER = {Tin\ Thien\ Tran},
       DOI = {10.1007/s10444-024-10182-z},
       URL = {https://doi.org/10.1007/s10444-024-10182-z},
}

@article {Li2001Pseudo-duals,
    AUTHOR = {Li, Shidong and Ogawa, Hidemitsu},
     TITLE = {Pseudo-duals of frames with applications},
   JOURNAL = {Appl. Comput. Harmon. Anal.},
  FJOURNAL = {Applied and Computational Harmonic Analysis. Time-Frequency
              and Time-Scale Analysis, Wavelets, Numerical Algorithms, and
              Applications},
    VOLUME = {11},
      YEAR = {2001},
    NUMBER = {2},
     PAGES = {289--304},
      ISSN = {1063-5203,1096-603X},
   MRCLASS = {46B15 (42C15 46C05)},
  MRNUMBER = {1848709},
MRREVIEWER = {Ole\ Christensen},
       DOI = {10.1006/acha.2001.0347},
       URL = {https://doi.org/10.1006/acha.2001.0347},
}

@article {Balazs2010Weighted,
    AUTHOR = {Balazs, Peter and Antoine, Jean-Pierre and Grybo\'s, Anna},
     TITLE = {Weighted and controlled frames: mutual relationship and first
              numerical properties},
   JOURNAL = {Int. J. Wavelets Multiresolut. Inf. Process.},
  FJOURNAL = {International Journal of Wavelets, Multiresolution and
              Information Processing},
    VOLUME = {8},
      YEAR = {2010},
    NUMBER = {1},
     PAGES = {109--132},
      ISSN = {0219-6913,1793-690X},
   MRCLASS = {42C15 (42C40 65T60)},
  MRNUMBER = {2654396},
       DOI = {10.1142/S0219691310003377},
       URL = {https://doi.org/10.1142/S0219691310003377},
}

@article {Dutkay2016Weighted,
    AUTHOR = {Dutkay, Dorin Ervin and Ranasinghe, Rajitha},
     TITLE = {Weighted {F}ourier frames on fractal measures},
   JOURNAL = {J. Math. Anal. Appl.},
  FJOURNAL = {Journal of Mathematical Analysis and Applications},
    VOLUME = {444},
      YEAR = {2016},
    NUMBER = {2},
     PAGES = {1603--1625},
      ISSN = {0022-247X,1096-0813},
   MRCLASS = {42C15},
  MRNUMBER = {3535778},
MRREVIEWER = {Jean-Pierre\ Gabardo},
       DOI = {10.1016/j.jmaa.2016.07.042},
       URL = {https://doi.org/10.1016/j.jmaa.2016.07.042},
}

@misc{Balazs2023Weighted,
      title={Weighted frames, weighted lower semi frames and unconditionally convergent multipliers}, 
      author={Peter Balazs and Rosario Corso and Diana Stoeva},
      year={2023},
      eprint={2310.18957},
      archivePrefix={arXiv},
      primaryClass={math.FA},
      url={https://arxiv.org/abs/2310.18957}, 
}

@article {Christensen1995Frames,
    AUTHOR = {Christensen, Ole},
     TITLE = {Frames and pseudo-inverses},
   JOURNAL = {J. Math. Anal. Appl.},
  FJOURNAL = {Journal of Mathematical Analysis and Applications},
    VOLUME = {195},
      YEAR = {1995},
    NUMBER = {2},
     PAGES = {401--414},
      ISSN = {0022-247X,1096-0813},
   MRCLASS = {47A05},
  MRNUMBER = {1354551},
MRREVIEWER = {I.\ Ya.\ Novikov},
       DOI = {10.1006/jmaa.1995.1363},
       URL = {https://doi.org/10.1006/jmaa.1995.1363},
}

@article {Haller2005Kaczmarz,
    AUTHOR = {Haller, Rainis and Szwarc, Ryszard},
     TITLE = {Kaczmarz algorithm in {H}ilbert space},
   JOURNAL = {Studia Math.},
  FJOURNAL = {Studia Mathematica},
    VOLUME = {169},
      YEAR = {2005},
    NUMBER = {2},
     PAGES = {123--132},
      ISSN = {0039-3223,1730-6337},
   MRCLASS = {41A65 (46C99)},
  MRNUMBER = {2140451},
MRREVIEWER = {Grzegorz\ Lewicki},
       DOI = {10.4064/sm169-2-2},
       URL = {https://doi.org/10.4064/sm169-2-2},
}

@article{Kwapien2001Kaczmarz,
author = {Stanisław Kwapień and Jan Mycielski},
journal = {Studia Mathematica},
language = {eng},
number = {1},
pages = {75-86},
title = {On the Kaczmarz algorithm of approximation in infinite-dimensional spaces},
url = {http://eudml.org/doc/284583},
volume = {148},
year = {2001},
}

@article{Berner2026Sequences,
  author  = {Berner, C.},
  title   = {Sequences that do frame reconstruction},
  journal = {Annals of Functional Analysis},
  year    = {2026},
  volume  = {17},
  pages   = {24},
  doi     = {10.1007/s43034-026-00506-z}
}

@article {Christensen1995Paley-Wiener,
    AUTHOR = {Christensen, Ole},
     TITLE = {A {P}aley-{W}iener theorem for frames},
   JOURNAL = {Proc. Amer. Math. Soc.},
  FJOURNAL = {Proceedings of the American Mathematical Society},
    VOLUME = {123},
      YEAR = {1995},
    NUMBER = {7},
     PAGES = {2199--2201},
      ISSN = {0002-9939,1088-6826},
   MRCLASS = {46C99 (42A99 42C15)},
  MRNUMBER = {1246520},
MRREVIEWER = {B.\ S.\ Rubin},
       DOI = {10.2307/2160957},
       URL = {https://doi.org/10.2307/2160957},
}

@book{Young1980Nonharmonic,
  author    = {Young, Robert M.},
  title     = {An Introduction to Nonharmonic Fourier Series},
  publisher = {Academic Press},
  address   = {New York},
  year      = {1980},
  series    = {Pure and Applied Mathematics},
  volume    = {93},
  isbn      = {9780127728506}
}

@Article{Herr2017Fourier,
AUTHOR = {Herr, John E. and Weber, Eric S.},
TITLE = {Fourier Series for Singular Measures},
JOURNAL = {Axioms},
VOLUME = {6},
YEAR = {2017},
NUMBER = {2},
ARTICLE-NUMBER = {7},
URL = {https://www.mdpi.com/2075-1680/6/2/7},
ISSN = {2075-1680},
DOI = {10.3390/axioms6020007}
}

@book {Gohberg1969Theory,
    AUTHOR = {Gohberg, I. C. and Kre\u in, M. G.},
     TITLE = {Introduction to the theory of linear nonselfadjoint operators},
    SERIES = {Translations of Mathematical Monographs},
    VOLUME = {Vol. 18},
      NOTE = {Translated from the Russian by A. Feinstein},
 PUBLISHER = {American Mathematical Society, Providence, RI},
      YEAR = {1969},
     PAGES = {xv+378},
   MRCLASS = {47.10},
  MRNUMBER = {246142},
}

@article {Marcus2015Interlacing,
    AUTHOR = {Marcus, Adam W. and Spielman, Daniel A. and Srivastava,
              Nikhil},
     TITLE = {Interlacing families {II}: {M}ixed characteristic polynomials
              and the {K}adison-{S}inger problem},
   JOURNAL = {Ann. of Math. (2)},
  FJOURNAL = {Annals of Mathematics. Second Series},
    VOLUME = {182},
      YEAR = {2015},
    NUMBER = {1},
     PAGES = {327--350},
      ISSN = {0003-486X,1939-8980},
   MRCLASS = {46L05 (42A05 46B03 46L30)},
  MRNUMBER = {3374963},
MRREVIEWER = {Robert\ S.\ Doran},
       DOI = {10.4007/annals.2015.182.1.8},
       URL = {https://doi.org/10.4007/annals.2015.182.1.8},
}

@article {Dutkay2011Bessel,
    AUTHOR = {Dutkay, Dorin Ervin and Han, Deguang and Weber, Eric},
     TITLE = {Bessel sequences of exponentials on fractal measures},
   JOURNAL = {J. Funct. Anal.},
  FJOURNAL = {Journal of Functional Analysis},
    VOLUME = {261},
      YEAR = {2011},
    NUMBER = {9},
     PAGES = {2529--2539},
      ISSN = {0022-1236,1096-0783},
   MRCLASS = {42C15 (28A80 46C05 46E30)},
  MRNUMBER = {2826404},
MRREVIEWER = {Ursula\ Maria\ Molter},
       DOI = {10.1016/j.jfa.2011.06.018},
       URL = {https://doi.org/10.1016/j.jfa.2011.06.018},
}

@article {He2013Exponential,
    AUTHOR = {He, Xing-Gang and Lai, Chun-Kit and Lau, Ka-Sing},
     TITLE = {Exponential spectra in {$L^2(\mu)$}},
   JOURNAL = {Appl. Comput. Harmon. Anal.},
  FJOURNAL = {Applied and Computational Harmonic Analysis. Time-Frequency
              and Time-Scale Analysis, Wavelets, Numerical Algorithms, and
              Applications},
    VOLUME = {34},
      YEAR = {2013},
    NUMBER = {3},
     PAGES = {327--338},
      ISSN = {1063-5203,1096-603X},
   MRCLASS = {42C15 (46E30)},
  MRNUMBER = {3027906},
MRREVIEWER = {Keri\ A.\ Kornelson},
       DOI = {10.1016/j.acha.2012.05.003},
       URL = {https://doi.org/10.1016/j.acha.2012.05.003},
}

@article {Duffin1952Class,
    AUTHOR = {Duffin, R. J. and Schaeffer, A. C.},
     TITLE = {A class of nonharmonic {F}ourier series},
   JOURNAL = {Trans. Amer. Math. Soc.},
  FJOURNAL = {Transactions of the American Mathematical Society},
    VOLUME = {72},
      YEAR = {1952},
     PAGES = {341--366},
      ISSN = {0002-9947,1088-6850},
   MRCLASS = {42.4X},
  MRNUMBER = {47179},
MRREVIEWER = {J.\ Korevaar},
       DOI = {10.2307/1990760},
       URL = {https://doi.org/10.2307/1990760},
}

@article {Nir2018Fourier,
    AUTHOR = {Lev, Nir},
     TITLE = {Fourier frames for singular measures and pure type phenomena},
   JOURNAL = {Proc. Amer. Math. Soc.},
  FJOURNAL = {Proceedings of the American Mathematical Society},
    VOLUME = {146},
      YEAR = {2018},
    NUMBER = {7},
     PAGES = {2883--2896},
      ISSN = {0002-9939,1088-6826},
   MRCLASS = {42C15 (42B10)},
  MRNUMBER = {3787351},
MRREVIEWER = {Juan\ Luis\ Varona},
       DOI = {10.1090/proc/13849},
       URL = {https://doi.org/10.1090/proc/13849},
}

@article {Berner2024Frame-like,
    AUTHOR = {Berner, Chad},
     TITLE = {Frame-like {F}ourier expansions for finite {B}orel measures on
              {$\Bbb R$}},
   JOURNAL = {Pure Appl. Funct. Anal.},
  FJOURNAL = {Pure and Applied Functional Analysis},
    VOLUME = {9},
      YEAR = {2024},
    NUMBER = {6},
     PAGES = {1527--1545},
      ISSN = {2189-3756,2189-3764},
   MRCLASS = {42A16 (42A20 42C15 46C07)},
  MRNUMBER = {4853157},
MRREVIEWER = {Beata\ Deregowska},
}

@article {Lai2011Fourier,
    AUTHOR = {Lai, Chun-Kit},
     TITLE = {On {F}ourier frame of absolutely continuous measures},
   JOURNAL = {J. Funct. Anal.},
  FJOURNAL = {Journal of Functional Analysis},
    VOLUME = {261},
      YEAR = {2011},
    NUMBER = {10},
     PAGES = {2877--2889},
      ISSN = {0022-1236,1096-0783},
   MRCLASS = {42C15 (28A80 46B15)},
  MRNUMBER = {2832585},
MRREVIEWER = {Peter\ R.\ Massopust},
       DOI = {10.1016/j.jfa.2011.07.014},
       URL = {https://doi.org/10.1016/j.jfa.2011.07.014},
}

\end{document}